\theoremstyle{definition}
\newtheorem{theorem}{Theorem}[section]
\newtheorem*{Theorem}{Main Theorem}
\newtheorem{prop}[theorem]{Proposition}
\newtheorem{cor}[theorem]{Corollary}
\newtheorem{example}[theorem]{Example}
\newtheorem{open problem}[theorem]{Open Problem}
\newtheorem{definition}[theorem]{Definition}
\newtheorem{remark}[theorem]{Remark}
\newtheorem{lemma}[theorem]{Lemma}
\newcommand{\R}{\mathbb{R}}
\newcommand{\RR}{\mathbb{R}}
\newcommand{\Z}{\mathbb{Z}}
\newcommand{\Q}{\mathbb{Q}}
\newtheorem{notation}[theorem]{Notation}
\begin{document}

\title{BOUNDARY OF THE RELATIVE OUTER SPACE}
\author{Erika Meucci}
\maketitle

\begin{abstract}
Let $\mathcal{A} = \{ A_1, \ldots, A_k \}$ be a system of free factors of $F_n$. The
group of relative automorphisms $\mathrm{Aut}(F_n; \mathcal{A})$
is the group given by the automorphisms of $F_n$ that restricted
to each $A_i$ are conjugations by
ele\-ments in $F_n$. The group of relative outer automorphisms is defined as
$\mathrm{Out}(F_n; \mathcal{A}) = \mathrm{Aut}(F_n; \mathcal{A}) / \mathrm{Inn}(F_n)$,
where $\mathrm{Inn}(F_n)$ is the normal subgroup of $\mathrm{Aut}(F_n)$ given by all
the inner automorphisms. This group acts on the relative outer space $\mathrm{CV}_n(\mathcal{A})$.
We prove that the dimension of the boundary of the relative outer space
is $\mathrm{dim}(\mathrm{CV}_n(\mathcal{A}))-1$.
\end{abstract}

\section{Introduction}

Let $F_n$ denote the free group of rank $n$. We consider the
group of automorphisms of $F_n$, denoted by $\mathrm{Aut}(F_n)$, and the
group of outer automorphisms
$$
\mathrm{Out}(F_n) = \mathrm{Aut}(F_n) / \mathrm{Inn}(F_n),
$$
where $\mathrm{Inn}(F_n)$ is the normal subgroup of $\mathrm{Aut}(F_n)$ given by all
the inner automorphisms. In 1986
Culler and Vogtmann introduced a space $\mathrm{CV}_n$ on which the
group $\mathrm{Out}(F_n)$ acts with finite stabilizers and proved that $\mathrm{CV}_n$
is contractible. That space $\mathrm{CV}_n$ is called \emph{outer space}.
In \cite{GaLe}, Gaboriau and  Levitt computed the dimension of the boundary
of outer space.
We are interested in studying particular subgroups of $\mathrm{Out}(F_n)$.

Let $\mathcal{A} = \{ A_1, \ldots, A_k \}$ be a system of free factors of $F_n$, that is, there exists
$B < F_n$ such that $F_n = A_1 * \cdots * A_k * B$. We define the
group of relative (to $\mathcal{A}$) automorphisms
$\mathrm{Aut}(F_n; \mathcal{A})$ given by the elements $f \in \mathrm{Aut}(F_n)$
such that $f$ restricted to each $A_i$ is a conjugation by an
element in $F_n$.
Note that $\mathrm{Aut}(F_n) > \mathrm{Aut}(F_n; \mathcal{A}) \triangleright \mathrm{Inn}(F_n)$.
We define also the group of relative outer automorphisms:
$$
\mathrm{Out}(F_n; \mathcal{A}) = \mathrm{Aut}(F_n; \mathcal{A}) / \mathrm{Inn}(F_n) <
\mathrm{Out}(F_n).
$$
In \cite{M}, it was introduced a contractible space, that we will denote by
$\mathrm{CV}_n(\mathcal{A})$, on which
$\mathrm{Out}(F_n; \mathcal{A})$ acts. This space is called relative
outer space and it can be thought as a subset of the compactification of
$\mathrm{CV}_n$. In \cite{M}, we proved that
an irreducible relative outer automorphism with irreducible
powers acts on the compactification of the modified
relative outer space with north-south dynamics.
A natural question arose: what is the dimension of the boundary?
Our goal is to give an answer to that question.
\begin{Theorem}\label{mainthm}
The dimension of $\partial \mathrm{CV}_n(\mathcal{A})$ is equal to
$\mathrm{dim} (\mathrm{CV}_n(\mathcal{A}))-1$.
\end{Theorem}
The main ingredient in the proof of the Main Theorem is the computation of the $\Q$-rank of a
tree (with special points).
In Section 2, we start defining the relative outer space and then
we review some of its properties studied in \cite{M}. In Section 3, we give
the definition of geometric and non-geometric trees with special points
and we discuss some features of the geometric trees that are a relative
version of the facts in \cite{GaLe}. The goal of
Section 4 is to define the index of a tree with special points. An upper
bound of the index will be a key step in the computation of the $\Q$-rank of
a tree with special points and hence in the proof of the Main Theorem in
Section 5.

\subsection*{Acknowledgments} I wish to thank Mladen Bestvina for his suggestions
and very useful conversations.

\section{Relative Outer Space}

In this section we define the relative outer space and its boundary. Moreover,
we review some of the properties of the relative outer space studied in \cite{M}.

Let $\mathcal{A} = \{ A_1, \ldots, A_k \}$ be a system of free factors of $F_n$, that is, there exists
$B < F_n$ such that $F_n = A_1 * \cdots * A_k * B$.

Consider $A_i=<y_1^i, \ldots,
y_{s(i)}^{i}>$ and $F_n=<y_1^1, \ldots, y_{s(k)}^{k}, x_1, \dots,
x_{n-\sum_{i=1}^{k} s(i)}>$. By a graph we mean a connected $1$-dimensional CW complex.
Let the \emph{relative rose} $R_n(\mathcal{A})$ be a graph obtained by a wedge of $n-\sum_{i=1}^{k}
s(i)$ circles $e_1, \ldots, e_{n-\sum_{i=1}^{k} s(i)}$ attaching
$\sum_{i=1}^{k} s(i)$ circles $C_1^1, \ldots, C_{s(k)}^{k}$ on $k$ stems,
where the $i$th stem has $C_1^i, \ldots, C_{s(i)}^{i}$ attached to it.
Moreover,
$$
\pi_1(R_n(\mathcal{A}), v) \cong F_n = <y_1^1, \ldots,
y_{s(k)}^{k}, x_1, \dots, x_{n-\sum_{i=1}^{k} s(i)}>,
$$
where $v$ is the vertex in $R_n(\mathcal{A})$ intersection
of the circles $e_1, \ldots, e_{n-\sum_{i=1}^{k} s(i)}$,
by declaring $y_i^j$ to be the homotopy
class of $C_i^j$ and $x_i$ to be the homotopy class of the loop
$e_i$.
Let $(R_n(\mathcal{A}), \underline{k})$ be the graph
$R_n(\mathcal{A})$ equipped with inclusions $k_j: \bigvee_{i=1}^{s(j)} S^1
\rightarrow R_n(\mathcal{A})$ that identifies
$\bigvee_{i=1}^{s(j)} S^1$ with $\bigvee_{i=1}^{s(j)} C_i^j$, for
all $j=1, \ldots, k$.
\begin{definition}
Let $\Gamma$ be a graph of rank $n$ with vertices of valence at
least $3$, equipped with embeddings $l_j: \bigvee_{i=1}^{s(j)} S^1
\rightarrow \Gamma$ for $j=1, \ldots, k$. We call $\mathbb{B}_j =
l_j(\bigvee_{i=1}^{s(j)} S^1)$ \emph{wedge cycle}. The \emph{dual
graph} of the $\mathbb{B}_j$'s is the graph with one vertex for each
wedge cycle, one vertex $w$ for each intersection between two or
more wedge cycles and edges between $w$ and vertices corresponding
to the wedge cycles meeting in $w$.
\end{definition}
\begin{definition}
An $(\mathcal{A},n)$\emph{-graph} $(\Gamma, \underline{l})$ is a
finite graph $\Gamma$ of rank $n$ with vertices of valence at least $3$,
with possible separating edges, equipped with embeddings $l_j:
\bigvee_{i=1}^{s(j)} S^1 \rightarrow \Gamma$ for $j=1, \ldots, k$,
such that any two $\mathbb{B}_j$ intersect in at most a point and
the dual graph of the $\mathbb{B}_j$'s is a forest.
\end{definition}

\begin{notation}
We denote by $\mathcal{A}$ the set of free factors $A_1,\ldots, A_k$
and we will denote $A_1 * \cdots * A_k * B$ by $\mathcal{A}*B$.
Sometimes we will also write $\mathcal{A}$ for the subgroup
$A_1 * \cdots * A_k$. The meaning of $\mathcal{A}$ should be clear from the context.
\end{notation}

\begin{definition}
A \emph{marked metric $(\mathcal{A},n)$-graph} $(\Gamma,\phi, l)$
is a marked graph (with possible separating edges) $(\Gamma, \phi)$ such that
\begin{itemize}
\item each edge $e$ in $\widehat{\Gamma}$ (the graph obtained from $\Gamma$ by collapsing the wedge
cycles to special points) has length $\widehat{l}(e) = l_{|\widehat{\Gamma}}(e)
\in (0,1]$ and each edge in a wedge cycle has length $0$;
\item the wedge cycles are disjoint.
\end{itemize}
\end{definition}
\begin{definition} \label{outspace}
The \emph{relative outer space} $\mathrm{CV}_n(\mathcal{A})$ is the space
of equivalence classes of marked metric $(\mathcal{A},n)$-graphs
where
        \begin{enumerate}
        \item the sum of all lengths of the edges in $\widehat{\Gamma}$
        is $1$ (relative volume $1$);
        \item $(\Gamma_1,\phi_1,l_1) \sim (\Gamma_2,\phi_2,l_2)$ if there is
        an isometry $h:\Gamma_1 \rightarrow \Gamma_2$ such that $h \circ
        \phi_1(C_i^j)=\phi_2(C_i^j)$, $\forall \, i,j$ and $h \circ \phi_1$
        is homotopic to $\phi_2$ rel. $C_i^j$, $\forall \,i,j$.
        \end{enumerate}
\end{definition}
\begin{remark}
This space (modulo possible edges of length $0$ not in the wedge cycles)
was introduced in the third chapter of \cite{M}, and it was called
modified relative outer space to distinguish this space from another space,
called relative outer space, introduced in the second chapter.
\end{remark}

Let $R_{n}(\mathcal{A})$ be the relative rose with each edge in
the wedge cycles of length $0$.
There is a natural right action of $\mathrm{Out}(F_n; \mathcal{A})$ on $\mathrm{CV}_n(\mathcal{A})$
given by changing the marking:
let $X = (\Gamma,\phi,l) \in \mathrm{CV}_n(\mathcal{A})$
and $\Psi \in \mathrm{Out}(F_n; \mathcal{A})$,
consider $\psi: R_{n}(\mathcal{A}) \rightarrow R_{n}(\mathcal{A})$
such that it fixes the wedge cycles and $[\psi_*] = \Psi$. The right action is given by
$$
X \cdot \Psi =(\Gamma,\phi,l) \cdot \Psi = (\Gamma, \phi \circ \psi,l).
$$
Notice that the stabilizer of a point may be infinite.
We define a topology on $\mathrm{CV}_n(\mathcal{A})$ by varying the length
of the edges that are not in any wedge cycle. Since the sum of the lengths of
these edges is $1$, $\mathrm{CV}_{n}(\mathcal{A})$ is a simplicial complex with
missing faces.
We define the \emph{relative spine} $S_{n}(\mathcal{A})$
of the relative outer space as the geometric realization of the
partially ordered set of open simplices. Notice that $S_{n}(\mathcal{A})$
is a simplicial complex.

In \cite{M}, we proved the following result.
\begin{theorem}\label{contramodispace}
The relative outer space $\mathrm{CV}_n(\mathcal{A})$ is contractible.
\end{theorem}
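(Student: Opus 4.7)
The plan is to adapt the classical Culler--Vogtmann proof that $\mathrm{CV}_n$ is contractible to the relative setting. The argument factors into two stages: construct a strong deformation retraction from $\mathrm{CV}_n(\mathcal{A})$ onto its spine $S_n(\mathcal{A})$, and then show the spine is contractible by a Morse-theoretic collapsing argument using a suitable length-function.

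For the deformation retraction, I would flow each marked metric $(\mathcal{A},n)$-graph $(\Gamma,\phi,l)$ toward the barycenter of the open simplex containing it, via linear interpolation of the edge lengths in $\widehat{\Gamma}$. Since edges in the wedge cycles already have length $0$, they are unaffected, so the wedge cycles stay disjoint and the condition that the dual graph of the $\mathbb{B}_j$'s be a forest is preserved throughout the flow. Iterating a barycentric interpolation over the face poset, together with a standard spine-retraction along missing faces, yields a continuous retract onto $S_n(\mathcal{A})$.

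For contractibility of the spine, I would follow the Culler--Vogtmann Morse-theoretic scheme. Fix a finite generating set $W$ of $F_n$ and define a norm $\|X\|$ on $S_n(\mathcal{A})$ as the sum of the translation lengths of the elements of $W$ on the universal cover of $X$. One then shows inductively that the sublevel sets $\{X : \|X\| \le c\}$ are contractible, by analyzing the descending link at each critical simplex. The basic moves are elementary folds and edge collapses in $\widehat{\Gamma}$, and the corresponding poset of admissible moves should be shown contractible, giving the usual discrete-Morse style collapse.

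The crucial point---and the main anticipated obstacle---is that the relative structure constrains which moves are admissible. Folds that would collapse an edge inside a wedge cycle, that would cause two wedge cycles $\mathbb{B}_i$ and $\mathbb{B}_j$ to share more than a point, or that would create a cycle in the dual graph, must all be forbidden. The heart of the proof is therefore to check that, after restricting to this admissible set of moves, the descending links in the Morse argument remain contractible; concretely, one must verify that an admissible fold factorization still exists between any two points of $S_n(\mathcal{A})$ and is unique up to a connected poset of admissible reorderings. Once this combinatorial check is in hand, the Culler--Vogtmann framework transplants cleanly and yields contractibility of $S_n(\mathcal{A})$, hence of $\mathrm{CV}_n(\mathcal{A})$.
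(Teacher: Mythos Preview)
The paper does not actually prove this theorem: the statement is imported from the author's thesis \cite{M} with no argument given here, so there is no in-paper proof to compare against. The paper does, however, point to a second route a few paragraphs later: it observes that $\mathrm{CV}_n(\mathcal{A})$ is a projectivized deformation space in the sense of Forester, and cites Guirardel--Levitt \cite{GL} and Clay \cite{Clay} for contractibility of such spaces in general. That general machinery gives contractibility essentially for free once one checks the elliptic-subgroup condition, and it sidesteps entirely the combinatorial link analysis you anticipate.

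Your outline is a reasonable blueprint for a direct Culler--Vogtmann style proof, and is presumably close in spirit to what \cite{M} does. But as written it is a plan, not a proof: the substantive content---that the descending links remain contractible after you forbid folds that merge wedge cycles or create cycles in the dual graph---is named but not carried out, and that is precisely the step where a naive adaptation can fail. A couple of smaller points: in the classical argument the ``norm'' is built from a (well-ordered) set of conjugacy classes rather than a generating set, and the Morse function lives on the poset of open simplices (i.e., on marked combinatorial graphs) rather than on metric graphs, so your description of sublevel sets would need adjusting. If you want to avoid the delicate link analysis altogether, invoking the deformation-space contractibility theorem of \cite{GL} or \cite{Clay} is the cleaner path and is the one the present paper itself leans on.
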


In \cite{M}, we computed the dimension of $\mathrm{CV}_n(\mathcal{A})$
and the dimension of $S_{n}(\mathcal{A})$.
We briefly sketch the proof of such computation.
Suppose that $\Gamma$ is a maximal graph in $S_{n}(\mathcal{A})$,
i.e., it has the maximum number of vertices,
and consider $\widehat{\Gamma}$. Denote by $V$ and $E$ the number
of vertices and edges of $\widehat{\Gamma}$ respectively.
The vertices corresponding to the special points have
valence $1$ and the remaining vertices have valence $3$. Hence,
$$
E = \frac{3(V-k)}{2} + \frac{k}{2}.
$$
Because $V-E = 1 - (n - \sum_{i=1}^{k} s(i))$, we have
$$
V = 2n + 2k - 2 - 2 \sum_{i=1}^{k} s(i)
$$
and hence, because we can collapse $V$ vertices to $s = \max \{ k, 1 \}$ vertices
and the wedge cycles are disjoint,
$$
\mbox{dim} (S_{n}(\mathcal{A})) = 2n + 2k - 2 - 2 \sum_{i=1}^{k} s(i) - s.
$$
Moreover,
$$
E = 3n + 2k - 3 - 3 \sum_{i=1}^{k} s(i)
$$
and because the relative volume of each graph is $1$,
$$
\mbox{dim} (\mathrm{CV}_n(\mathcal{A})) = 3n + 2k - 4 - 3 \sum_{i=1}^{k} s(i).
$$
Notice that if $k=0$, $\mbox{dim} (\mathrm{CV}_{n}(1)) = 3n-4 = \mbox{dim} (\mathrm{CV}_{n})$.
Indeed, if $k=0$ the relative outer space is the standard outer space.

\begin{example}\label{expoly}
Consider $\mathrm{Out}(F_2;A)$, where $F_2 = <a, \, b>$,
$A=<a>$.
In that case, $\mathrm{Out}(F_2;A)$ is isomorphic to the infinite dihedral
group $D_{\infty}$ (see \cite{M}).
The relative outer space $\mathrm{CV}_{2}(A)$ is a point $X$ with
an infinite countable number of half-open edges attached to it.
The action of the group on $\mathrm{CV}_{2}(A)$
is given by rotating the edges. Hence, the stabilizer of $X$ is $\mathrm{Out}(F_2;A)$.
Moreover, $\mathrm{CV}_{2}(A)/ \mathrm{Out}(F_2;A)$ is a $1$-simplex with a missing vertex.
The relative spine $S_{2}(A)$ is a point with an infinite number of
closed $1$-simplices coming out from that point. The relative outer space
$\mathrm{CV}_{2}(A)$ is not locally compact and this is true in general:
$\mathrm{CV}_n(\mathcal{A})$ is not locally compact if $\mathcal{A} \neq 1$.
\end{example}

We can think of the relative outer space as a deformation space of trees.
We recall the main definitions and some results for actions on $\R$-trees.
\begin{definition}
Let $(X,d)$ be a metric space.
We say that $(X,d)$ is an $\R$-\emph{tree} if for any $x,y \in X$ there is a
unique arc from $x$ to $y$ and this arc is a geodesic segment.
\end{definition}
Let $\phi: T \rightarrow T$ be an isometry of an $\R$-tree $T$. The \emph{translation length}
of $\phi$ is
$$
l(\phi) = \inf \{ d(x, \phi(x)) \, | \, x \in T \}.
$$
The infimum is always attained and there are two possible cases.
If $l(\phi)>0$, there is a unique $\phi$-invariant
line called the axis of $\phi$, and $\phi_{| \text{axis}}$ is a translation by $l(\phi)$.
In this case, we say that $\phi$ is \emph{hyperbolic}.
If $l(\phi) = 0$, then $\phi$ fixes a non-empty subtree of $T$ and is said
to be \emph{elliptic}.

Let $G$ be a group acting by isometries on an $\R$-tree $T$. A tree
equipped with an isometric action is called $G$-\emph{tree}.
The action is \emph{nontrivial} if no point of $T$ are fixed by the whole group.
It is \emph{minimal} if there is no proper $G$-invariant subtree. The action is
\emph{free} if any nonidentity group element does not leave an element of $T$ fixed.
Let $Gx = \{ gx \, | \,  g \in G \}$ be the orbit of $x \in T$.
An action of $G$ on $T$ has \emph{dense orbits} if the closure of $Gx$ is the whole tree $T$.
A map $f:T \rightarrow T'$ between $\R$-trees is a \emph{morphism} if every segment in $T$
can be written as a finite union of subsegments, each of which is mapped
isometrically into $T'$. Note that an equivariant morphism does not increase distances.

The notion of a deformation space was introduced by Forester in \cite{F}.
By definition, two $G$-trees are in the same deformation space if they have
the same elliptic subgroups, i.e., if a subgroup of $G$ fixes one point in a tree, it also
fixes the image of that point in any other tree. Identifying two trees if they differ
only by rescaling the metric leads to the projectivized deformation space.
Guirardel and Levitt \cite{GL} and Clay \cite{Clay} proved the contractibility of this space.
The relative outer space $\mathrm{CV}_{n}(\mathcal{A})$ is a projectivized deformation space.

Let the \emph{deformation space} $\mathcal{D}$ be the space of simplicial $F_n$-trees with elliptic subgroups
$A_1, \ldots, A_k$.
Let $X=(\Gamma, \phi,l) \in \mathrm{CV}_{n}(\mathcal{A})$. The tree $T_1$ associated to $X$
is constructed in the following way. Let $\Gamma_0$ be the graph obtained by $\Gamma$
changing the length of the wedge cycles from $0$ to a constant $\varepsilon>0$.
Consider the universal cover $\widetilde{\Gamma_0}$ of $\Gamma_0$ and collapse all the
rays that correspond to words $a_{i_1} a_{i_2} a_{i_3} \cdots$,
$a_{i_j} \in \mathcal{A}$ and its translates.
We will call $T_1$ an $(\mathcal{A}*B)$-tree with special vertices or just
$(\mathcal{A}*B)$-tree.
\begin{example}\label{extree1}
Consider $\mathrm{Out}(F_2;A)$, where $F_2 = <a,b>$,
$A=<a>$, and consider the point $(\Gamma,\phi = \mathrm{id},l)
\in \mathrm{CV}_2(A)$ consisting of a loop corresponding to $b$ and
a vertex corresponding to $a$. The graph $\Gamma_0$ is a rose with
two petals. One petal corresponds to $a$ and its length is $\varepsilon$
and the other petal correspond to $b$ and its length is $1$.
In order to construct the tree $T_1$ associated to this graph, first we consider
the universal covering of $\Gamma_0$, and
then we collapse the $a$-axis and its translated axes.
Notice that there are infinitely many edges coming out from
each vertex (see Figure~\ref{fig:extree1bo}).

\begin{figure}[htbp]
\begin{center}
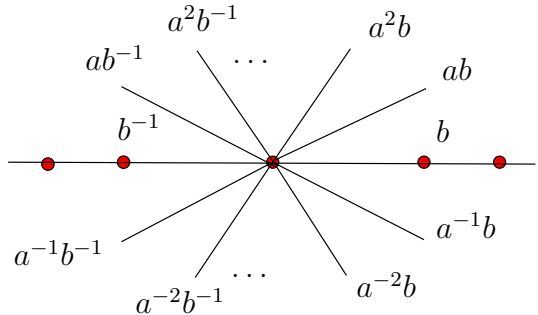

\caption{The tree $T_1$ associated to $(\Gamma,\phi,l)$ in
Example~\ref{extree1}.} \label{fig:extree1bo}
\end{center}
\end{figure}

\end{example}
In \cite{M}, we defined the boundary of the relative outer space.
We recall the following definition due to Cohen and Lustig.
\begin{definition}
An action of $F_n$ on an $\R$-tree is \emph{very small} if
\begin{enumerate}
\item all edge stabilizers are cyclic (\emph{small action}),
\item $\mbox{Fix}(g)$ is isometric to a subset of $\R$ for $1 \neq g \in F_n$
(no fixed tripods), and
\item $\mbox{Fix}(g) = \mbox{Fix}(g^i)$ for all $i \geq 2$ (no obtrusive powers).
\end{enumerate}
\end{definition}
Cohen and Lustig \cite{CoLu} showed that a simplicial action is in $\overline{\mathrm{CV}_n}$ if and
only if it is very small. Bestvina and Feighn \cite{BFouter} proved that this is actually true for
all actions concluding that the closure of outer space is the set of very small
actions of $F_n$ on $\R$-trees.
In the relative case, the main issue is understanding on which $\R$-trees
the group $\mathcal{A} *B$ is acting on and those trees are the $(\mathcal{A} *B)$-trees
with special points.
Notice that $\mathrm{CV}_n(\mathcal{A})$ embeds naturally
in the space $\mathfrak{T}$ of actions of
$\mathcal{A}*B$ on metric $\R$-trees with special points
such that all edge stabilizers are cyclic.
As in the case of the standard outer space $\mathrm{CV}_n$, we can endow $\mathrm{CV}_n(\mathcal{A})$
with three topologies: the simplicial (or weak) topology, the length function (or axes)
topology, and the Gromov-Hausdorff topology. We already defined the simplicial
topology. The length function topology is the coarsest topology making the translation length
function $T \mapsto l_T(\phi)$ continuous. The Gromov-Hausdorff topology
is defined in the following way. Given $X \subset T$, $A \subset \mathrm{Out}(F_n; \mathcal{A})$,
and $\varepsilon >0$, a fundamental system of neighborhoods for $T$,
denoted by $U_T(X,A, \varepsilon)$ is the set of trees $T'$ such that there is
a map $f:X \rightarrow T'$ satisfying
$$
| d(x, \phi \cdot y)-d(f(x), \phi \cdot f(y)) | < \varepsilon, \quad \forall \, x,y \in X, \, \forall \, \phi \in A.
$$
In the standard $\mathrm{CV}_n$ the three topologies are equivalent (see \cite{P}).
Because the trees corresponding to the points in $\mathrm{CV}_n(\mathcal{A})$
are \emph{irreducible} (in the sense of \cite{GL}), $\mathrm{CV}_n(\mathcal{A})$ with
the length function topology is homeomorphic to $\mathrm{CV}_n(\mathcal{A})$
with the Gromov-Hausdorff topology. However, the length function topology of
the embedding in the space of actions is not the simplicial topology of
$\mathrm{CV}_n(\mathcal{A})$. For example,
in the case $n=2$, $F_2=<a,b>$ and $A=<a>$, the modified relative outer space
$\mathrm{CV}_2(A)$ is the union of half-open $1$-simplices attached to a point $X$
(see Example~\ref{expoly}). The relative outer space $\mathrm{CV}_2(A)$ endowed
with the simplicial topology is not (locally) compact, while $\mathrm{CV}_2(A)$
endowed with the length function topology is a compact space: the two sequences
of trees that are the middle-points of the $1$-simplices corresponding to the graphs
with marking induced by $a \mapsto a$, $b \mapsto a^N b$,
as $N \rightarrow \pm \infty$, converge to $X$.
We can consider the closure $\overline{\mathrm{CV}_n}(\mathcal{A})$
of the image of the embedding $\mathrm{CV}_n(\mathcal{A}) \hookrightarrow \mathfrak{T}$.
The closure consists of projective classes of actions on
metric $\R$-trees with special points where
\begin{enumerate}
\item all edge stabilizers are cyclic,
\item $\mbox{Fix}(g)$ is isometric to a subset of $\R$ for $1 \neq g \in F_n / \mathcal{A}$,
\item the $A_i$'s are elliptic elements, and
\item $\mbox{Fix}(g) = \mbox{Fix}(g^i)$ for all $i \geq 2$.
\end{enumerate}
The \emph{boundary of the relative outer space} is
$$
\partial \mathrm{CV}_n(\mathcal{A}) = \overline{\mathrm{CV}_n}(\mathcal{A}) \setminus \mathrm{CV}_n(\mathcal{A}).
$$
Our goal is to prove that the dimension of $\partial \mathrm{CV}_n(\mathcal{A})$
is the dimension of $\mathrm{CV}_n(\mathcal{A}) -1$.
From now on, when we talk about
the topology of $\mathrm{CV}_n(\mathcal{A})$, $\overline{\mathrm{CV}_n}(\mathcal{A})$,
or $\partial \mathrm{CV}_n(\mathcal{A})$ we mean the length function topology.
In this topology, $\mathrm{CV}_n(\mathcal{A})$ is contractible
and $\overline{\mathrm{CV}_n}(\mathcal{A})$ is contractible (see \cite{GL}),
and compact (because it is a closed subset of the compact set $\overline{\mathrm{CV}_n}$).
However, $\partial \mathrm{CV}_n(\mathcal{A})$ is not compact (see Example~\ref{expoly}
and the above discussion about the two sequences).

\section{(Non-)Geometric Trees with Special Points}

Let $T$ be a minimal $(\mathcal{A}* B)$-tree with special points and length function $l$.
A subtree of $T$ is called \emph{finite} if it is the convex hull of a finite
subset.
Let $K \subset T$ be a finite subtree such that $K \cap x_i K \neq \emptyset$, for
$i \in \{ 1, \ldots, n - \sum s(i) \}$ and $K \cap A_j K \neq \emptyset$, for
any $j \in \{ 1, \ldots, k \}$.
Let $\mathcal{K}=(K, \{ \varphi_i, \varphi^j \})$ be the system with $\varphi_i$
the restriction of the action of $x_i$ to $x_{i}^{-1} K \cap K$ and $\varphi^j$
the restriction of the action of $A_j$ to $A_{j}^{-1} K \cap K$.
By Theorem I.1 \cite{GaLe}, there exists a unique $(\mathcal{A}* B)$-tree such that
\begin{enumerate}
\item $T_{\mathcal{K}}$ contains $K$ as an isometrically embedded subtree;
\item If $p \in P_i= x_{i}^{-1} K \cap K$, then $x_ip= \varphi_i(p)$.
If $p \in Q_j = A_{j}^{-1} K \cap K$ and $p$ is not a special point
with elliptic subgroup $A_j$, then there is $a \in A_j$ such
that $p \in a^{-1} K \cap K$, and $a p = \varphi^j(p)$.
If $p$ is a special point with elliptic subgroup $A_j$, $\varphi^j(p)=p$;
\item every orbit of the action meets $K$;
\item if $T'$ is another $(\mathcal{A}* B)$-tree satisfying the first two
items, there exists a unique equivariant morphism $f: T_{\mathcal{K}} \rightarrow T'$
such that $f(p)=p$, for $p \in K$.
\end{enumerate}

If the action on $T'$ is (very) small, then the action on $T_{\mathcal{K}}$ is
(very) small.
Moreover, the tree $T_{\mathcal{K}}$ is not necessarily minimal, but there are arbitrarily
large subtrees $K$ such that $T_{\mathcal{K}}$ is minimal (see Section II in
\cite{GaLe}).

\begin{definition}
If there exists $\mathcal{K}$ such that $T_{\mathcal{K}} = T$, then $T$ is called
\emph{geometric}. Otherwise, $T$ is called \emph{non-geometric}.
\end{definition}

\begin{prop}\label{PropI4}
Let $T$ be a geometric minimal $(\mathcal{A}* B)$-tree, i.e., $T= T_{\mathcal{K}}$
for some system $\mathcal{K}$. We have:
\begin{enumerate}
\item Two points $(p_1, \omega_1),(p_2, \omega_2) \in K \times (\mathcal{A}* B)$
define the same point in $T_{\mathcal{K}}$ if and only if
$$
p_2 = \varphi_{i_1}^{\varepsilon_1} \cdots \varphi_{i_m}^{\varepsilon_m}(p_1) \mbox{ such that }
\omega_{2}^{-1} \omega_1 = \alpha_{i_1}^{\varepsilon_1} \cdots \alpha_{i_m}^{\varepsilon_m},
$$
where $\varphi_{i_j} \in \{ \varphi_i, \varphi^j \}$, $\varepsilon_{i_j} \in \{ \pm 1 \}$,
$\alpha_{i_j} \in \{ y_1^1, \ldots, y_{s(k)}^{k}, x_1, \dots, x_{n-\sum_{i=1}^{k} s(i)} \}$.

\item Let $p_1,p_2 \in K$ and $\omega \in \mathcal{A}* B$. We have $p_2 = \omega p_1$
if and only if
$$
p_2 = \varphi_{i_1}^{\varepsilon_1} \cdots \varphi_{i_m}^{\varepsilon_m}(p_1) \mbox{ with }
\omega = \alpha_{i_1}^{\varepsilon_1} \cdots \alpha_{i_m}^{\varepsilon_m},
$$
where $\varphi_{i_j}$, $\varepsilon_{i_j}$, and $\alpha_{i_j}$
are as above.
\end{enumerate}
\end{prop}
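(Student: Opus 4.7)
The plan is to follow the strategy of Proposition I.4 in \cite{GaLe}, adapting it to accommodate the special points with elliptic stabilizers $A_j$. First I observe that statement (2) is an immediate consequence of (1): if $p_2 = \omega p_1$ in $T_{\mathcal{K}}$, then $(p_1, \omega)$ and $(p_2, 1)$ represent the same point of $T_{\mathcal{K}}$, and applying (1) with $\omega_1 = \omega$, $\omega_2 = 1$ yields the required factorization. The ``if'' direction of (1) follows directly from property 2 in the defining list of $T_{\mathcal{K}}$: each partial isometry $\varphi_i$ (respectively $\varphi^j$) is realized by the action of $x_i$ (respectively of some $a \in A_j$, with $\varphi^j$ fixing special $A_j$-points), so a word factorization $\omega_2^{-1} \omega_1 = \alpha_{i_1}^{\varepsilon_1} \cdots \alpha_{i_m}^{\varepsilon_m}$ matching $p_2 = \varphi_{i_1}^{\varepsilon_1} \cdots \varphi_{i_m}^{\varepsilon_m}(p_1)$ produces, by iteration, the equality $\omega_1 \cdot p_1 = \omega_2 \cdot p_2$ in $T_{\mathcal{K}}$.

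For the converse direction of (1), I would construct an explicit candidate $\widetilde{T}$ for $T_{\mathcal{K}}$ as the quotient of $K \times (\mathcal{A}*B)$ (each slice $K \times \{\omega\}$ carrying the metric of $K$) by the equivalence relation $\sim$ generated by the elementary moves
\begin{enumerate}
\item $(p,\omega) \sim (\varphi_i(p), \omega x_i^{-1})$ for $p \in P_i$;
\item $(p,\omega) \sim (\varphi^j(p), \omega a^{-1})$ for $p \in a^{-1} K \cap K$ with $a \in A_j$ and $p$ not a special point with stabilizer $A_j$;
\item $(p,\omega) \sim (p, \omega a)$ for each $a \in A_j$ whenever $p$ is a special point with stabilizer $A_j$.
\end{enumerate}
Equip $\widetilde{T}$ with the natural $(\mathcal{A}*B)$-action by left multiplication on the second factor. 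The plan is to verify that $\widetilde{T}$ is an $\R$-tree, that it contains $K$ (embedded as $K \times \{1\}/\!\sim$) isometrically, and that it satisfies properties 1--4 of the universal property defining $T_{\mathcal{K}}$ recalled just before the proposition. The uniqueness clause of Theorem I.1 in \cite{GaLe} then forces $\widetilde{T} \cong T_{\mathcal{K}}$, and unwinding the generators of $\sim$ translates the condition ``$(p_1,\omega_1) \sim (p_2,\omega_2)$'' into precisely the word equation in the statement of (1).

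The main obstacle is showing that $\sim$ introduces no unexpected identifications, so that $\widetilde{T}$ is genuinely a tree rather than a more collapsed quotient. I would proceed by induction on the length of $\omega_2^{-1}\omega_1$ as a word in the generators of $\mathcal{A}*B$: outside of special points the argument runs as in the non-relative setting of \cite{GaLe}, while at a special $A_j$-fixed point all additional collapsings must already arise from move (iii). The relative subtlety is precisely the interaction between moves (ii) and (iii), and it reduces to a normal-form argument in the free product $A_1 * \cdots * A_k * B$: one checks that any cancellation that would force an unexpected identification comes from a subword that evaluates to an element of some $A_j$ fixing a special point, i.e., from an application of move (iii). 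Once this is established, both (1) and (2) follow.
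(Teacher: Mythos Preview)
The paper does not give its own proof of this proposition: it is stated without argument, in the same way that Proposition~\ref{PropI8andCor} is accompanied only by ``See \cite{GaLe} for a proof of these facts.'' The intended justification is simply that this is the verbatim relative analogue of Proposition~I.4 in \cite{GaLe}, with the partial isometries $\varphi^j$ coming from the $A_j$-action adjoined to the original system $\{\varphi_i\}$.

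Your proposal is therefore not in conflict with the paper's proof---there is none to compare against---and your strategy is exactly the one underlying \cite{GaLe}: build the quotient of $K\times(\mathcal{A}*B)$ by the elementary moves, check it is a tree satisfying the four defining properties, and invoke uniqueness. The reduction of (2) to (1) and the ``if'' direction of (1) are correct as you wrote them. The one place where you go beyond a routine transcription of \cite{GaLe} is in isolating move~(iii) for special points and arguing that any extra collapsing at such a point is absorbed by the $A_j$-stabilizer via a normal-form argument in the free product; this is the right idea, and it is precisely what is needed to make the relative version go through, but in your write-up it remains a sketch rather than a verified step. If you intend this as a full proof you should spell out that argument; if you intend it, as the paper does, as a pointer to \cite{GaLe}, then what you have is already more detailed than the paper itself.
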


Let $S$ be the finite set of vertices of $K$, $x_{i}^{-1} K \cap K$,
$A_{j}^{-1} K \cap K$, $\varphi_i(x_{i}^{-1} K \cap K)$,
$\varphi^j(A_{j}^{-1} K \cap K)$, for all values of $i$ and $j$.
We can list some properties of $T_{\mathcal{K}}$.
See \cite{GaLe} for a proof of these facts.
\begin{prop}\label{PropI8andCor}
Let $T_{\mathcal{K}}$ as above.
\begin{enumerate}
\item If $p \in T_{\mathcal{K}}$ is a branch point, its orbit contains a
point of $S$. The action of $\mathrm{Stab}(p)/ \mathcal{A}$ on the set of directions
$\pi_0(T_{\mathcal{K}} \setminus \{ p \})$ has only finitely many orbits.
In particular, there are only finitely many orbits of branch points in
$T_{\mathcal{K}}$.

\item If the $(\mathcal{A}* B)$-action on $T_{\mathcal{K}}$ is small with
dense orbits, then every edge stabilizer is trivial.
\end{enumerate}
\end{prop}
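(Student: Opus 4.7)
The plan is to adapt the proofs of Proposition I.8 and its corollary from Gaboriau--Levitt \cite{GaLe} to the relative setting, using the explicit description of points of $T_{\mathcal{K}}$ provided by Proposition \ref{PropI4}. The core observation is that $T_{\mathcal{K}}$ is assembled from translates of $K$ glued along the sets $P_i, Q_j$ via the partial isometries $\varphi_i, \varphi^j$, so branching can only be introduced at the boundary of these gluing regions.

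For part (1), I would fix a representative $(p_0, \omega) \in K \times (\mathcal{A}*B)$ of a point $p \in T_{\mathcal{K}}$, and assume $p_0$ lies in the interior of $K$ and avoids every $P_i$, $Q_j$, $\varphi_i(P_i)$, $\varphi^j(Q_j)$. Then by Proposition \ref{PropI4} no nontrivial chain of partial isometries reaches $p_0$, so no other translate of $K$ is glued at $p$, and $p$ has exactly the two directions it inherits from inside $K$. Consequently every branch point has a representative in the finite set $S$, giving only finitely many orbits of branch points. For the stabilizer action on $\pi_0(T_{\mathcal{K}} \setminus \{p\})$, the directions at such a representative are labeled by the finite combinatorial data encoding which partial isometries meet $p_0$; quotienting by $\mathcal{A}$ absorbs the contribution of any $A_j$ fixing a special point, leaving only finitely many orbits.

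For part (2), suppose an edge $e$ has nontrivial stabilizer, which is cyclic $\langle g \rangle$ by smallness. Dense orbits produce some $h \in \mathcal{A}*B$ with $he \cap e$ a non-degenerate subarc $\sigma$; then both $g$ and $hgh^{-1}$ fix $\sigma$, and the small plus no-fixed-tripods hypotheses force the group they generate to be cyclic, so $hgh^{-1} = g^{\pm 1}$. Iterating the density argument manufactures infinitely many conjugates of $g$ lying in its centralizer, contradicting the fact that centralizers of hyperbolic elements in the free product $\mathcal{A}*B$ are cyclic once the no-obtrusive-powers condition rules out hidden roots.

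The main obstacle will be the careful treatment of the special points: the datum $\varphi^j$ is not a single-element action but encodes the action of the whole factor $A_j$ on $Q_j$, with a distinguished elliptic fixed point. Separating the two regimes --- when the relevant vertex of $S$ is a special point carrying an elliptic $A_j$, and when it is an ordinary branch vertex --- is what turns the Gaboriau--Levitt bookkeeping into a genuinely relative statement; the direction-count at the special vertices, in particular, must be done after passing to the quotient by $\mathcal{A}$, as reflected in the statement of the proposition.
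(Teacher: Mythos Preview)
For part~(1) your sketch follows the Gaboriau--Levitt line the paper defers to, with one imprecision: the correct hypothesis is $p_0\notin S$, not that $p_0$ avoids the $P_i,Q_j$ altogether. When $p_0$ lies in the interior of an edge of some $P_i$ (so $p_0\notin S$ but $p_0\in P_i$), a translate of $K$ \emph{is} glued at $p$, but along a nondegenerate segment, so no new direction appears; this case must be included for the argument to go through.

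The real problem is part~(2). Your argument never uses that $T=T_{\mathcal K}$ is geometric, which is the essential hypothesis here, and it invokes the no-fixed-tripods and no-obtrusive-powers conditions, which belong to \emph{very small} actions and are not assumed. More seriously, the contradiction you describe does not occur: the element $g$ fixing an arc is elliptic, not hyperbolic, so the appeal to centralizers of hyperbolic elements is misplaced; and once $g$ and $hgh^{-1}$ lie in a common cyclic subgroup of the free group $F_n$, conjugacy forces $hgh^{-1}=g$ (the alternative $hgh^{-1}=g^{-1}$ would impose a Klein-bottle relation, impossible in a torsion-free free group), so all the ``conjugates'' you manufacture are just $g$ itself and no contradiction arises from their multiplicity. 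The argument the paper points to in \cite{GaLe} is quite different and genuinely exploits the structure of $T_{\mathcal K}$: via Proposition~\ref{PropI4}(2), an element fixing an arc yields a word in the partial isometries $\varphi_i,\varphi^j$ acting as the identity on a subarc of $K$; rigidity of isometries of finite trees extends this identity to a maximal subarc whose endpoints lie in $S$, and one then combines the finiteness of $S$ with density of orbits to conclude. You should redo part~(2) along those lines, using the combinatorics of $\mathcal K$ rather than an abstract dynamical argument.
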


\section{The Index of a Tree with Special Points}

We introduce the index of a tree with special points and we find an upper bound
for such index. This upper bound will be essential in the computation of the
$\Q$-rank of a tree with special points.

Let $T$ be a minimal $(\mathcal{A}* B)$-tree with special points. Given $x \in T$,
a \emph{direction} $d$ from $x$ is a germ of isometric embeddings
$d: [0, \varepsilon] \rightarrow T$ with $d(0)=x$. The subgroup
$\mathrm{Stab}(x) < \mathcal{A}* B$ acts on the set
$$
D = \{ d \, | \, d \mbox{ is a direction from } x \}.
$$
Note that if $d \in D$, then $\mathrm{Stab}(d)$ is either trivial
or infinite cyclic.
\begin{notation}
Let $x \in T$. We denote
$$
\mbox{rk}(\mathrm{ST}(x)) =
\left\{
  \begin{array}{l}
    \mbox{rk}(\mathrm{Stab}(x) / \mathcal{A}), \mbox{ if } x \mbox{ is not a special point}, \\
    \mbox{rk}(\mathrm{Stab}(x) / \mathcal{A}) +1, \mbox{ if } x \mbox{ is a special point}. \\
  \end{array}
\right.
$$
\end{notation}
\begin{definition}
The \emph{index} of $x \in T$ is
$$
i(x) = 2 \mbox{rk}(\mathrm{ST}(x)) + v_1(x)-2,
$$
where $v_1(x)$ is the number of $\mathrm{Stab}(x)$-orbits of directions
from $x$ with trivial stabilizer.
\end{definition}

Note that the definition coincides with the definition in
\cite{GaLe} if there are no special points, i.e., if $\mathcal{A}=1$.
We will prove that $i(x)$ is finite
for any $x \in T$.
Notice that if $\mathrm{Stab}(x) = 1$, then $i(x)+2$ is the cardinality
of $D$. Moreover, we have the following result.

\begin{prop}\label{PropIIIone}
The index $i(x)$ is always non-negative. If $i(x)>0$, then $x$ is a
branch point. Conversely, if the action
is very small, then every branch point that is not a special point
has $i(x) \geq 1$.
\end{prop}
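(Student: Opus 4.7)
The plan is to follow the template of Gaboriau--Levitt's proof of their Proposition~I.6 in \cite{GaLe}, adapting the bookkeeping to absorb the ``$+1$'' contribution at special points built into $\mathrm{rk}(\mathrm{ST}(x))$. For non-negativity, I would split on $\mathrm{rk}(\mathrm{ST}(x))$. If $\mathrm{rk}(\mathrm{ST}(x))\ge 1$ (which is automatic at every special point because of the $+1$), then $2\,\mathrm{rk}(\mathrm{ST}(x))\ge 2$ and so $i(x)\ge v_1(x)\ge 0$. Otherwise $\mathrm{rk}(\mathrm{ST}(x))=0$, which forces $x$ to be non-special with $\mathrm{Stab}(x)/\mathcal{A}$ trivial, and the goal reduces to $v_1(x)\ge 2$. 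For this I would invoke minimality of the $(\mathcal{A}*B)$-action on $T$: every point of a minimal $\R$-tree action lies in the convex hull of axes of hyperbolic elements and so has at least two directions; in the current sub-case the stabilizers of these directions are cyclic subgroups of the trivial group $\mathrm{Stab}(x)/\mathcal{A}$ and hence trivial, so both directions contribute to $v_1(x)$.

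For the implication $i(x)>0\Rightarrow x$ is a branch point, I would argue the contrapositive. If $x$ has at most two directions, then $v_1(x)\le 2$, and the action of $\mathrm{Stab}(x)$ on a set of size $\le 2$, combined with the cyclic-edge-stabilizer constraint and the no-tripod condition, bounds $\mathrm{rk}(\mathrm{ST}(x))\le 1$. A short case-check on the handful of resulting configurations (special vs.\ non-special $x$, $|D|\in\{1,2\}$) delivers $i(x)\le 0$ in every case.

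For the converse, let $x$ be a non-special branch point in a very small action, so $|D|\ge 3$. If $\mathrm{rk}(\mathrm{ST}(x))=0$, then $\mathrm{Stab}(x)$ does not identify directions, every direction has trivial stabilizer, and $v_1(x)=|D|\ge 3$, giving $i(x)\ge 1$. If $\mathrm{rk}(\mathrm{ST}(x))\ge 1$, then the very-small hypothesis (cyclic edge stabilizers, no tripod fixation, no obtrusive powers) constrains the $\mathrm{Stab}(x)$-action on the $\ge 3$ directions enough to force at least one orbit of directions with trivial stabilizer; whence $v_1(x)\ge 1$ and $i(x)\ge 2\cdot 1+1-2=1$. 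The main obstacle is precisely this last step: producing a $\mathrm{Stab}(x)$-orbit of direction with trivial stabilizer at a non-special branch point whose stabilizer is itself non-trivial. There the full very-small hypothesis is used essentially, and the special-point ``$+1$'' bookkeeping must be kept carefully separate from the rank of $\mathrm{Stab}(x)/\mathcal{A}$ to avoid double-counting.
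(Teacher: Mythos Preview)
Your approach matches the paper's, which simply refers to Proposition~III.1 of \cite{GaLe} (not~I.6) and directs a case split on $\mathrm{rk}(\mathrm{ST}(x))$ with the special-point $+1$ absorbed exactly as you describe. The only difference is that the paper uses three cases ($\mathrm{rk}(\mathrm{ST}(x))\ge 2$, $=1$, $=0$) rather than your two; this is marginally cleaner for the converse, since when $\mathrm{rk}(\mathrm{ST}(x))\ge 2$ one gets $i(x)\ge 2$ outright and there is no need to produce a direction-orbit with trivial stabilizer---your claim $v_1(x)\ge 1$ is only required, and only cleanly argued from the very-small hypotheses, in the rank-$1$ case.
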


The proof of Proposition~\ref{PropIIIone} is similar to the proof of
Proposition III.1 \cite{GaLe} considering the three cases:
$\mbox{rk}(\mathrm{ST}(x)) \geq 2$, $\mbox{rk}(\mathrm{ST}(x)) = 0$,
and $\mbox{rk}(\mathrm{ST}(x)) = 1$.

\begin{remark}
If $\mathcal{A} \neq 1$, for any special point $x \in T$, there is
$i \in \{ 1, \ldots, k \}$ such that $A_i < \mathrm{Stab}(x)$,
and $\mbox{rk}(\mathrm{Stab}(x)) \geq \mbox{rk}(A_i)$.
\end{remark}

Let $x \in T$. We denote the $(\mathcal{A}*B)$-orbit of $x$ by $[x]$.
Because $i(x_1) = i(x_2)$ for any $x_1, x_2 \in T$ such that $[x_1]=[x_2]$,
the index $i([x])$ is well defined.
\begin{definition}
The \emph{total index} of $T$ is
$$
i(T) = \sum_{[x] \in T/ (\mathcal{A}*B)} i([x]).
$$
\end{definition}

\begin{theorem}\label{thmIII2}
Let $T$ be a minimal small $(\mathcal{A}*B)$-tree.
\begin{enumerate}
\item If $T$ is geometric, then
$$
i(T) = 2n+2k-2-2 \sum_{i=1}^{k} s(i).
$$
\item If $T$ is not geometric, then
$$
i(T) < 2n+2k-2-2 \sum_{i=1}^{k} s(i).
$$
\end{enumerate}
\end{theorem}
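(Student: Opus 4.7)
The plan is to mirror the proof of Theorem III.2 in \cite{GaLe}, incorporating the ``$+1$'' contribution of each special point to $\mathrm{rk}(\mathrm{ST})$. For index purposes this makes $(\mathcal{A}*B)$ behave like an auxiliary free group of rank $m+k$, where $m = n - \sum_{i=1}^{k} s(i)$, whose expected total index is $2(m+k)-2 = 2n + 2k - 2 - 2\sum_{i=1}^{k} s(i)$.

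For the geometric case, I would fix a finite system $\mathcal{K} = (K, \{\varphi_i, \varphi^j\})$ realizing $T = T_{\mathcal{K}}$ minimally (possible by Section II of \cite{GaLe}) and associate a foliated $2$-complex $\Sigma$ built from $K$ by attaching a rectangular band for each partial isometry $\varphi_i$, partial band structures for the free part of each $\varphi^j$, and at each orbit of special points the wedge of $s(j)$ loops that records the elliptic action of $A_j$. By Proposition \ref{PropI4} the horizontal leaf space of $\Sigma$ is $T$, and by Proposition \ref{PropI8andCor}(1) every branch-point orbit appears at the finitely many singular vertices coming from the distinguished finite set $S$. A local calculation at each singular orbit identifies $i([x])$ with the defect between the number of foliation leaves meeting $[x]$ and $2\,\mathrm{rk}(\mathrm{ST}(x))$; summing these defects and applying the Euler characteristic of $\Sigma$ yields the claimed equality.

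For the non-geometric case, I would exhaust $T$ by minimal geometric approximations $T_r = T_{\mathcal{K}_r}$ together with equivariant morphisms $f_r : T_r \to T$, each $T_r$ satisfying the geometric formula from part (1). The standard observation is that $f_r$ can only decrease the total index: collapsing an arc of $T_r$ merges branch points or destroys directions, while a fold at a branch point either drops $v_1$ or fuses two orbits with a net loss in rank. This gives the inequality $i(T) \leq 2n + 2k - 2 - 2\sum_{i=1}^{k} s(i)$ in the limit; when $T$ is non-geometric no $f_r$ is an isomorphism, so strictly index-reducing folds or collapses persist at every stage and the bound becomes strict.

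The hardest step is the local-to-global Euler characteristic bookkeeping in the geometric case. At a special orbit carrying $A_j$, the $+1$ convention in $\mathrm{rk}(\mathrm{ST}(x))$ must exactly absorb the Euler characteristic contribution $1 - s(j)$ of the attached wedge of $s(j)$ loops. Matching this correction uniformly across all $k$ special orbits, without introducing spurious terms, is what pins down the precise constant $2n + 2k - 2 - 2\sum_{i=1}^{k} s(i)$, and requires a careful choice of normalization of the band complex together with the horizontal foliation it carries.
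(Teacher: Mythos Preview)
Your overall strategy parallels the paper's (and Gaboriau--Levitt's), but the packaging differs. The paper does not build a global foliated band complex; instead, for each orbit $[x]$ it constructs a combinatorial \emph{orbit graph} $[x]_{\mathcal{K}}$ with vertex set $[x]\cap K$, an edge for each application of a partial isometry $\varphi_i$, and exactly \emph{one} extra loop $\gamma_j$ (of weight $1$) when $[x]$ is the special orbit fixed by $A_j$. A direction-level graph $[x]_{\mathcal{K}}^d$ tracks germs, and Lemma~\ref{importantlemma1} identifies $\mbox{rk}(\pi_1([x]_{\mathcal{K}}))$ with $\mbox{rk}(\mathrm{ST}(x))$ and the components of $[x]_{\mathcal{K}}^d$ with orbits of directions. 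The index of $[x]$ then reduces to a vertex/edge defect, and summing over all orbits telescopes via the tree identity $\sum_p(v(p)-2)=-2$ applied to $K$ and to each $P_i$.

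Your sketch has a concrete gap at the special points. You attach a wedge of $s(j)$ loops and expect the $+1$ convention to absorb its Euler contribution $1-s(j)$; but the $+1$ in $\mbox{rk}(\mathrm{ST}(x))$ is worth exactly one loop (Euler contribution $-1$), not $s(j)$ loops, so these agree only when $s(j)=1$. The paper's single loop $\gamma_j$ is precisely the normalization that makes the bookkeeping exact: the quotient by $\mathcal{A}$ in $\mathrm{Stab}(x)/\mathcal{A}$ has already discarded the $s(j)$ generators of $A_j$, and the $+1$ reinstates a single marker recording that the orbit is special. If you insist on a band complex with $\pi_1(\Sigma)\cong F_n$ (hence $s(j)$ loops at each special point), you must then separately subtract $2(s(j)-1)$ at every special orbit to pass from the absolute index $2n-2$ to the relative one; this can be made to work but is less transparent than the paper's orbit-graph device. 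For the non-geometric case both you and the paper defer to strong convergence as in \cite{GaLe}; note, however, that strictness of the inequality does not follow merely from $f_r$ failing to be an isomorphism---it comes from the specific mechanism in \cite{GaLe} by which a non-geometric limit forces an additional identification of directions at some branch point for arbitrarily large $m$.
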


\begin{proof}
First, assume that $T$ is geometric. Hence, $T = T_{\mathcal{K}}$,
for some system $\mathcal{K}$. Let $[x] \subset T_{\mathcal{K}}$.
We define the graph $[x]_{\mathcal{K}}$ with

\begin{itemize}
\item vertices $p \in [x] \cap K$;

\item an edge $x_i$ from $p$ to $\varphi_i(p)$ if $p \in P_i$
and a loop $\gamma_j$ if $p$ is elliptic
and $A_j$ is the elliptic subgroup acting non-trivially on $D$.
\end{itemize}

By Proposition~\ref{PropI4}, $[x]_\mathcal{K}$ is connected.
We define the weight of an edge $e$ corresponding to $x_i$ and denoted
by $w(e)$, to be the valence of its origin $p \in P_i$. The weight of
$\gamma_j$ is $1$. All but finitely many edges have
weight $2$. If $\Gamma$ is a finite tree and $p \in \Gamma$, we denote
by $v_{\Gamma}(p)$ the valence of $p$ in $\Gamma$.
We define the graph $[x]_{\mathcal{K}}^{d}$ replacing each
vertex $p$ of $[x]_{\mathcal{K}}$ by $v_{K}(p)$ vertices
representing directions from $p \in K$, and replacing each edge by
$w(e)$ edges. Let $\pi: [x]_{\mathcal{K}}^{d} \rightarrow [x]_{\mathcal{K}}$
be the natural projection.

\begin{lemma}\label{importantlemma1}
If $p \in [x] \cap K$, then the following statements are true.
\begin{enumerate}
\item $\mbox{rk}(\pi_1([x]_{\mathcal{K}})) = \mbox{rk}(\mbox{ST}(p))$.

\item If $x$ is not a special point, $\pi_0([x]_{\mathcal{K}}^{d})$
is in one-to-one correspondence with the set of orbits under
$\mathrm{Stab}(p)/ \mathcal{A}$ of directions $d$ from $p$ in $T_{\mathcal{K}}$.
If $x$ is a special point, $\pi_0([x]_{\mathcal{K}}^{d})$
is in one-to-one correspondence with $\gamma_j$ union the set of orbits under
$\mathrm{Stab}(p)/ \mathcal{A}$ of directions $d$ from $p$ in $T_{\mathcal{K}}$.

\item If $x$ is not a special point and $C$ is a component of
$[x]_{\mathcal{K}}^{d}$, then $\pi_1(C)$ is isomorphic to the
corresponding subgroup $\mathrm{Stab}(d) \in \{ 1, \Z \}$.
If $x$ is a special point and $C$ is a component of
$[x]_{\mathcal{K}}^{d}$ not containing $\gamma_j$, then $\pi_1(C)$ is isomorphic to the
corresponding subgroup $\mathrm{Stab}(d) \in \{ 1, \Z \}$.
\end{enumerate}
\end{lemma}

Lemma~\ref{importantlemma1} follows from Lemma III.5 \cite{GaLe} and the definition of
$[x]_{\mathcal{K}}$. Let $G$ be a finite connected subgraph of $[x]_{\mathcal{K}}$
containing each vertex in $S$ and every edge of weight not equal to $2$.
Let $G' = \pi^{-1}(G) \subset [x]_{\mathcal{K}}^{d}$.
By Proposition~\ref{PropI8andCor} and Lemma~\ref{importantlemma1}, $[x]_{\mathcal{K}}^{d}$
has a finite number of components. Moreover, we can assume that $\pi_1(G'_j)$
generates $\pi_1(C_j)$, for any component $C_j$ of $[x]_{\mathcal{K}}^{d}$.
As in the proof of Theorem III.2 \cite{GaLe}, $\pi_1([x]_{\mathcal{K}})$ is finitely
generated and we may assume that $\pi_1(G)$ generates $\pi_1([x]_{\mathcal{K}})$.
We can consider $[x] \cap S \neq \emptyset$, otherwise $i([x])=0$.

\begin{remark}
If $T'$ is a finite tree and $v_{T'}(p)$ denotes the valence of $p \in T'$,
then
$$
\sum_{p \in T'} (v_{T'}(p)-2) = -2.
$$
\end{remark}

By Lemma~\ref{importantlemma1}, the definition of $G$, and
$2-2 \mbox{rk} (\pi_1(G)) = 2V(G) - 2E(G)$, we have
$$
\left.
  \begin{array}{ccl}
    i([x]) & = & 2 \mbox{rk}(\mbox{ST}(x)) - 2 + v_1(x) = 2 \mbox{rk} (\pi_1([x]_{\mathcal{K}}) -2 + \sum_j (1- \mbox{rk}(\pi_1([x]_{\mathcal{K}}^{d}))) \\
     & = & 2 \mbox{rk} (\pi_1(G)) -2 + \sum_j (1- \mbox{rk}(\pi_1(G'_j))) \\
     & = & \sum_{p \in V(G)} (v_K(p) -2) - \sum_{e \subset E(G)}(w(e)-2) \\
     & = & \sum_{p \in V([x]_{\mathcal{K}})} (v_K(p) -2) - \sum_{e \subset E([x]_{\mathcal{K}})}(w(e)-2) \\
     & = & \sum_{p \in [x] \cap K} (v_K(p) -2) +2k - \sum_{i=1}^{n- \sum s(i)} \sum_{p \in [x] \cap P_i}(v_{P_i}(p)-2). \\
  \end{array}
\right.
$$
Hence,
$$
i(T) = \sum_{[x] \in T/ (\mathcal{A}*B)} i([x])= -2+2k+2n-2 \sum_{i=1}^{k} s(i).
$$
The case when $T$ is non-geometric can be done, as in the proof of Theorem III.2 \cite{GaLe},
appro\-xi\-mating $T$ with a sequence of minimal small $(\mathcal{A}*B)$-trees
$T_{\mathcal{K}_m}$ \emph{strongly converging} to the tree $T$.
\end{proof}

\begin{cor}\label{CorIIIthree}
If $T$ is a minimal very small $(\mathcal{A}*B)$-tree, the number of
orbits of branching points is at most $2n + 2k - 2 - 2 \sum_{i=1}^{k} s(i)$.
\end{cor}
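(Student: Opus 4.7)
The plan is to combine the upper bound on the total index from Theorem~\ref{thmIII2} with a lower bound of $1$ for the index of each branch point orbit, which follows essentially from Proposition~\ref{PropIIIone}. Because $T$ is minimal and very small (hence small), Theorem~\ref{thmIII2} yields
$$
i(T) = \sum_{[x] \in T/(\mathcal{A}*B)} i([x]) \leq 2n+2k-2-2\sum_{i=1}^{k} s(i).
$$
Since every summand is non-negative (again by Proposition~\ref{PropIIIone}), it suffices to prove that every orbit of branch points contributes at least $1$ to the total, at which point dividing the bound by $1$ gives the desired count.

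For a branch point $x$ that is \emph{not} a special point, Proposition~\ref{PropIIIone} directly gives $i([x]) \geq 1$. For a branch point $x$ that \emph{is} a special point, I would argue as follows. By the notation convention, $\mathrm{rk}(\mathrm{ST}(x)) = \mathrm{rk}(\mathrm{Stab}(x)/\mathcal{A}) + 1 \geq 1$, so
$$
i(x) = 2\,\mathrm{rk}(\mathrm{ST}(x)) + v_1(x) - 2 \geq v_1(x).
$$
If $\mathrm{rk}(\mathrm{ST}(x)) \geq 2$ then already $i(x) \geq 2$. Otherwise $\mathrm{rk}(\mathrm{ST}(x))=1$, meaning $\mathrm{Stab}(x)$ acts trivially on the set of directions modulo $\mathcal{A}$, and I must show $v_1(x) \geq 1$. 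Here I use the very small hypothesis: edge stabilizers are cyclic and $\mathrm{Fix}(g)$ is isometric to a subset of $\R$ for each $1 \neq g \in F_n/\mathcal{A}$. Since $x$ is a branch point it has at least three directions; if each had non-trivial (cyclic) stabilizer then some generator of the $A_i$ factor would fix an initial segment in three directions simultaneously, contradicting linearity of its fixed set. Therefore at least one direction has trivial stabilizer, giving $v_1(x) \geq 1$.

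Combining the two cases, every branch point orbit satisfies $i([x]) \geq 1$, so the number of such orbits is at most $i(T) \leq 2n+2k-2-2\sum_{i=1}^{k} s(i)$. The main subtlety, and the step that requires actual work rather than citation, is the analysis of special branch points with $\mathrm{rk}(\mathrm{ST}(x))=1$, where one must invoke the no-fixed-tripod condition from the very small hypothesis to force $v_1(x) \geq 1$; the rest of the argument is a direct application of Theorem~\ref{thmIII2} and Proposition~\ref{PropIIIone}.
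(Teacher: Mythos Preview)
Your overall strategy---bound $i(T)$ via Theorem~\ref{thmIII2} and then show that each orbit of branch points contributes at least $1$ to the sum---is exactly how the corollary is meant to follow in the paper (it is stated immediately after Theorem~\ref{thmIII2} with no separate proof, the work having been done in Proposition~\ref{PropIIIone}). So the approach matches the paper.

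Your treatment of \emph{special} branch points, however, contains genuine errors. First, $\mathrm{rk}(\mathrm{ST}(x))=1$ does not mean that ``$\mathrm{Stab}(x)$ acts trivially on the set of directions modulo $\mathcal{A}$''; it only says that $\mathrm{rk}(\mathrm{Stab}(x)/\mathcal{A})=0$, i.e.\ $\mathrm{Stab}(x)$ is essentially some $A_j$, which may act highly nontrivially on the set $D$ of directions. Second, the no-tripod clause in the description of $\overline{\mathrm{CV}_n}(\mathcal{A})$ is imposed only for $1\neq g\in F_n/\mathcal{A}$, so it cannot be invoked for elements of $A_j$---and those are precisely the elements whose fixed sets you are trying to constrain. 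Third, even if a tripod restriction were available, the inference ``three directions each with nontrivial cyclic stabilizer $\Rightarrow$ some single generator of $A_j$ fixes all three'' fails whenever $\mathrm{rk}(A_j)\geq 2$: distinct directions may have stabilizers generated by different primitive elements of $A_j$ with trivial common intersection. Thus your step forcing $v_1(x)\geq 1$ for special branch points is not justified as written.
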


\begin{example}
Consider $\mathrm{Out}(F_2;A)$, where $F_2 = <a,b>$,
$A=<a>$, and the geometric tree $T_1$ in Figure~\ref{fig:extree1bo}.
In this case we have only one orbit of branching points. Let $x \in T_1$
be a branch point.
The stabilizer $\mathrm{Stab}(x) = A =<a>$. Hence,
$\mbox{rk}(\mathrm{Stab}(x)) = \mbox{rk}(A)=1$. Moreover,
$v_1(x) = | \{ [b], [b^{-1}] \} | =2$, where $[b^{\varepsilon}]$ is the
$A$-orbit of the direction associated to $b^{\varepsilon}$
($\varepsilon \in \{ \pm 1 \}$). Therefore,
$$
i(T_1) = i(x) = 2 \cdot 1 +2 - 2 = 2 = 2n+2k-2-2 \sum_{i=1}^{k} s(i),
$$
because $n=2$, $k=1$, and $s(1)=1$.
\end{example}

\section{The $\Q$-rank of a Tree with Special Points}

Let $T \neq \R$ be a minimal $(\mathcal{A}*B)$-tree with non-Abelian
length function $l$. Let $L= L(T)< \R$ be generated by the values of $l$.
The $\Q$-rank of $T$ is denoted by $r_{\Q}(T)$ and is the dimension
of the $\Q$-vector space $L \otimes_{\Z} \Q$ generated by $L$.
If $r_{\Q}(T)$ is finite, $r_{\Q}(T)$ is the rank $r$ of the
Abelian group $L$ and $L/2L \cong (\Z / 2 \Z)^r$.
Let $\Lambda= \Lambda(T)$ be the subgroup generated by distances between the
branch points.

\begin{prop}\label{Prop41}
Let $T \neq \R$ be a minimal $(\mathcal{A}*B)$-tree with non-Abelian
length function and no inversion.
\begin{enumerate}
\item Let $x_1, \ldots, x_{n-\sum_{i=1}^{k} s(i)}$ be a basis for $B$. The numbers
$$
l \Big( x_1 \Big), \ldots, l \Big( x_{n-\sum_{i=1}^{k} s(i)} \Big)
$$
ge\-ne\-ra\-te $L \mbox{ mod } 2 \Lambda$.
\item Let $\{ p_j \}_{j \in J}$ be representatives of $(\mathcal{A}*B)$-orbits
of branch points. For $j_0 \in J$, the numbers of $d(p_{j_0},p_j)$ generate
$\Lambda \mbox{ mod } L$.
\end{enumerate}
\end{prop}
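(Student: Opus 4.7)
The plan is to adapt the proof of Proposition~IV.1 in \cite{GaLe} to the relative setting. The key new feature is that each basis element $y_i^j$ of $A_j$ acts elliptically on $T$, fixing the special point associated to $A_j$; consequently $l(y_i^j)=0$, and among the full generating set $\{y_i^j,\, x_i\}$ only the $x_i$'s contribute nonzero translation lengths. I would fix throughout a base branch point $p$ (for part (2) I may take $p = p_{j_0}$) and rely on the standard $\R$-tree identities
$$d(p, g\cdot p) = l(g) + 2\, d(p, \mathrm{axis}(g)) \quad\text{for hyperbolic } g, \qquad d(p, g\cdot p) = 2\, d(p, \mathrm{Fix}(g)) \quad\text{for elliptic } g.$$
In each case the correction is twice the distance from a branch point to its projection onto an axis or onto a fixed set; this projection is itself a branch point (it has the direction towards $p$ together with at least two further $g$-related directions), so every correction lies in $2\Lambda$.

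For part (1), take an arbitrary $g \in \mathcal{A}*B$, write $g = \alpha_1 \cdots \alpha_m$ as a product of generators, and set $g_t = \alpha_1 \cdots \alpha_t$. The concatenated broken path $p \to g_1 p \to \cdots \to g_m p$ has total length $\sum_t d(p, \alpha_t\cdot p)$ by equivariance, while the actual geodesic from $p$ to $g\cdot p$ differs from it by twice the total backtracking. Each backtrack is an overlap between two consecutive subsegments, whose endpoints are branch points (translates of $p$ together with the branching points created when the path reverses), so the net correction lies in $2\Lambda$. Combining with the identities above,
$$l(g) \;\equiv\; d(p, g\cdot p) \;\equiv\; \sum_t d(p, \alpha_t \cdot p) \;\equiv\; \sum_t l(\alpha_t) \pmod{2\Lambda},$$
and the right-hand side is a $\Z$-linear combination of the $l(x_i)$'s, proving (1).

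For part (2), I would show that every $d(p,q)$ between branch points lies in $L + \sum_j \Z \cdot d(p_{j_0}, p_j)$. Writing $p = g\cdot p_{j_1}$ and $q = h\cdot p_{j_2}$, equivariance reduces the problem to controlling $d(p_{j_0}, \omega\cdot p_j)$ for $\omega \in \mathcal{A}*B$ and $j \in J$, and I would induct on the word length of $\omega$. In the inductive step $\omega = \alpha\omega'$ the median identity in the tripod $(p_{j_0},\, \alpha\cdot p_{j_0},\, \omega\cdot p_j)$ yields
$$d(p_{j_0}, \omega\cdot p_j) = d(p_{j_0}, \alpha\cdot p_{j_0}) + d(p_{j_0}, \omega'\cdot p_j) - 2\delta,$$
where $\delta$ is the distance from $\alpha\cdot p_{j_0}$ to the median and the middle term has been rewritten by equivariance. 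The first summand lies in $L + 2\Lambda$ by part (1) applied to $\alpha$; the second is handled by induction; and $\delta$ is a distance between branch points, hence in $\Lambda$. The base case $\omega = 1$ contributes precisely the proposed generators $d(p_{j_0}, p_j)$.

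The main obstacle is managing the interlocking dependence of $L$ and $\Lambda$: controlling either group modulo the other requires invoking identities involving the other, and the inductions must be arranged so that only corrections in the already-established generating set accumulate at each step. A secondary technical point is confirming that the auxiliary points appearing throughout (projections onto axes and fixed sets, overlap endpoints, tripod medians) are genuine branch points of $T$, so that the corresponding distances actually lie in $\Lambda$. This relies on the non-Abelian hypothesis on $l$ and the abundant branching supplied by Proposition~\ref{PropIIIone}; the special points, being themselves branch points under these hypotheses, require no separate treatment.
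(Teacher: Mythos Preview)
Your approach is essentially the same as the paper's: the paper does not give a detailed argument but simply observes that Proposition~\ref{Prop41} is the relative version of Proposition~IV.1 in \cite{GaLe}, the only modification being that $l(y)=0$ for every $y\in A_j$, so among a full basis of $F_n$ only the $x_i$'s contribute to $L\bmod 2\Lambda$. Your write-up spells out the underlying Gaboriau--Levitt argument (broken-path/backtracking for part (1), tripod induction for part (2)) with exactly this observation incorporated, so it matches the paper's intended proof.
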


Proposition~\ref{Prop41} is the relative version of Proposition IV.1 \cite{GaLe}.
Indeed, in our case $l(y)=0$, for $y \in A_j$ and $j = 1, \ldots, k$.

\begin{cor}\label{cortoProp41}
The group $\Lambda/ 2 \Lambda$ is generated by $n- \sum_{i=1}^{k} s(i)+b-1$ elements, where $b$ is
the number of orbits of branch points.
\end{cor}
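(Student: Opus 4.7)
The corollary should follow directly by combining the two parts of Proposition~\ref{Prop41}. The plan is to substitute one generation statement into the other and count.

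First I would note the implicit inclusions $2\Lambda \subseteq L \subseteq \Lambda$ that are built into Proposition~\ref{Prop41}: Part (2) speaks of $\Lambda \bmod L$, so $L \subseteq \Lambda$, and Part (1) speaks of $L \bmod 2\Lambda$, so $2\Lambda \subseteq L$. Part (2) then gives
$$
\Lambda = L + \langle d(p_{j_0},p_j) : j \in J \rangle,
$$
and Part (1) gives
$$
L = \langle l(x_1), \ldots, l(x_{n-\sum s(i)}) \rangle + 2\Lambda.
$$

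Substituting the second identity into the first yields
$$
\Lambda = \langle l(x_1), \ldots, l(x_{n-\sum s(i)}) \rangle + \langle d(p_{j_0}, p_j) : j \in J \rangle + 2\Lambda.
$$
Passing to the quotient by $2\Lambda$, the group $\Lambda/2\Lambda$ is generated by the images of the $l(x_i)$ together with the images of the $d(p_{j_0}, p_j)$. There are $n - \sum_{i=1}^{k} s(i)$ elements of the first type, and among the $d(p_{j_0}, p_j)$ with $j \in J$, the one corresponding to $j = j_0$ vanishes; since $|J| = b$, this leaves $b - 1$ nontrivial distances. Summing gives the bound $n - \sum_{i=1}^{k} s(i) + b - 1$.

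There is no real obstacle here, since everything is algebraic manipulation once Proposition~\ref{Prop41} is granted. The only point worth flagging is the verification that the two quotient statements in Proposition~\ref{Prop41} make sense simultaneously, which is exactly the assertion $2\Lambda \subseteq L \subseteq \Lambda$; this is standard in the Gaboriau--Levitt framework and carries over to the relative setting without change. The genuine work lies in Proposition~\ref{Prop41} itself, not in this corollary.
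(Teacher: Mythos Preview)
Your proposal is correct and is exactly the intended deduction: the paper states Corollary~\ref{cortoProp41} without proof, as an immediate consequence of Proposition~\ref{Prop41}, and your combination of parts (1) and (2) together with the count $(n-\sum s(i)) + (b-1)$ is the standard argument from \cite{GaLe} carried over verbatim to the relative setting. One minor remark: you do not actually need the inclusion $2\Lambda \subseteq L$ for the counting to go through---it suffices that the $l(x_i)$ generate the image $(L+2\Lambda)/2\Lambda$ inside $\Lambda/2\Lambda$, which is precisely what ``generate $L \bmod 2\Lambda$'' says---so the only inclusion you genuinely use is $L \subseteq \Lambda$, and that one is immediate since every translation length is the distance between two branch points on an axis.
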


\begin{prop}\label{Prop42}
\begin{enumerate}
\item Geometric $(\mathcal{A} *B)$-actions have finite rank.

\item Consider a non-geometric $(\mathcal{A} *B)$-tree with special points $T$
as the strong limit of a sequence $T_{\mathcal{K}_m}$.
If $\liminf_{m \rightarrow + \infty} r(T_{\mathcal{K}_m})< \infty$,
then
$$
r_{\Q}(T) \leq \liminf_{m \rightarrow + \infty} r(T_{\mathcal{K}_m})
$$
and
$$
r_{\Q}(T) < \liminf_{m \rightarrow + \infty} r(T_{\mathcal{K}_m}).
$$
\end{enumerate}
\end{prop}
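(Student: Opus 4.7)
Part (1) should follow by combining the branch-point count of Section~3 with the generating statements of Proposition~\ref{Prop41}. Because $T$ is geometric, Corollary~\ref{CorIIIthree} bounds the number $b$ of $(\mathcal{A}*B)$-orbits of branch points by $2n+2k-2-2\sum_{i=1}^{k} s(i)$. Corollary~\ref{cortoProp41} then expresses $\Lambda/2\Lambda$ as generated by $n-\sum s(i) + b - 1$ elements, so $\Lambda$ is a finitely generated abelian group; Proposition~\ref{Prop41}(1) supplies the remaining $n-\sum s(i)$ generators $l(x_i)$ needed to pass from $\Lambda$ to $L$, and finiteness of $r(T) = \mathrm{rk}(L)$ follows.

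For part (2), the non-strict inequality $r_{\Q}(T) \leq \liminf_m r(T_{\mathcal{K}_m})$ should come from strong convergence. Any finite collection of group elements $g_1, \ldots, g_r \in \mathcal{A}*B$ whose translation lengths in $T$ are $\Q$-linearly independent satisfies $l_{T_{\mathcal{K}_m}}(g_i) \to l_T(g_i)$, and $\Q$-linear independence is preserved under passage to the limit of a bounded-rank sequence, so $r \leq r(T_{\mathcal{K}_m})$ for all sufficiently large $m$. The strict inequality for non-geometric $T$ should then come from the index gap in Theorem~\ref{thmIII2}: each geometric $T_{\mathcal{K}_m}$ realizes $i(T_{\mathcal{K}_m}) = 2n+2k-2-2\sum s(i)$, whereas $i(T) < 2n+2k-2-2\sum s(i)$ by Theorem~\ref{thmIII2}(2). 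Through Corollary~\ref{cortoProp41}, this gap in indices translates into a strict loss in the number of independent generators contributed to $\Lambda/2\Lambda$ by branch-point orbits: as $m \to \infty$ either a branch-point orbit of $T_{\mathcal{K}_m}$ collapses to a non-branch point of $T$ or two orbits merge, producing a $\Q$-relation in $L(T) \otimes \Q$ not present in $L(T_{\mathcal{K}_m})$ for large $m$.

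The main technical obstacle is making the strict inequality quantitative: one must identify, from the strong convergence data, a specific $\Q$-relation among the $l(g_i)$ that corresponds to the lost portion $i(T_{\mathcal{K}_m}) - i(T) \geq 1$ of the index. The relative setting complicates this because $\mathrm{rk}(\mathrm{ST}(x))$ treats special and non-special points differently (special points contribute an extra $+1$), so the index-to-rank translation via Proposition~\ref{Prop41}(2) must separately account for branch points at which some $A_j$ becomes elliptic versus purely free branch points. Nevertheless, the overall strategy parallels the proof of Proposition~IV.2 in \cite{GaLe}, with the relative index computed in Theorem~\ref{thmIII2} substituting for the Gaboriau--Levitt index throughout.
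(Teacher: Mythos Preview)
The paper itself gives no proof of Proposition~\ref{Prop42}: it simply declares it to be ``the relative version of Proposition~IV.2 \cite{GaLe}''. So the relevant comparison is with the Gaboriau--Levitt argument, and there your sketch has two genuine gaps.

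\textbf{Part (1).} You infer from Corollary~\ref{cortoProp41} that $\Lambda/2\Lambda$ is finitely generated, ``so $\Lambda$ is a finitely generated abelian group''. This implication is false in general (take $\Lambda=\Q$, for which $\Lambda/2\Lambda=0$). The argument in \cite{GaLe} is more direct and does not pass through the branch-point count at all: if $T=T_{\mathcal K}$ then every distance between branch points, and every value of $l$, lies in the subgroup of $\R$ generated by the finitely many numbers $d(s,s')$ with $s,s'$ in the finite set $S\subset K$. Finite rank follows immediately. Note also that in the paper's logical order, Corollary~\ref{Cor43} \emph{uses} Proposition~\ref{Prop42}(1) as input, so your route would be circular if you tried to invoke Corollary~\ref{Cor43}.

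\textbf{Part (2).} Your argument for the non-strict inequality (``$\Q$-linear independence is preserved under passage to the limit'') does not work: $\Q$-linear independence is not an open condition on tuples of real numbers, so convergence $l_{T_{\mathcal K_m}}(g_i)\to l_T(g_i)$ gives no control. The mechanism in \cite{GaLe} is different and purely algebraic: strong convergence provides equivariant morphisms $f_m:T_{\mathcal K_m}\to T$ that restrict to isometries on $K_m$, and since the $K_m$ exhaust $T$ one obtains an actual inclusion $\Lambda(T)\subset\Lambda(T_{\mathcal K_m})$ for large $m$, hence $r_{\Q}(T)\le r(T_{\mathcal K_m})$. For the strict inequality, the index gap of Theorem~\ref{thmIII2} is suggestive but the argument in \cite{GaLe} is again more concrete: because $T$ is non-geometric, $f_m$ is not an isometry, so it folds two germs at some point $p\in K_m$; this $p$ contributes an element of $\Lambda(T_{\mathcal K_m})$ that becomes redundant in $\Lambda(T)$, forcing $r_{\Q}(T)<r(T_{\mathcal K_m})$. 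Your ``collapsing/merging of branch-point orbits'' picture is morally this folding, but as written it does not produce an explicit relation and the appeal to the index gap does not by itself yield one.
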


Proposition~\ref{Prop42} is the relative version of Proposition IV.2 \cite{GaLe}.

\begin{cor}\label{Cor43}
Let $T$ be a geometric minimal $(\mathcal{A} *B)$-tree without inversions. Let
$b$ the number of orbits of branch points. Then $r(T) \leq n- \sum_{i=1}^{k} s(i)+b -1$.
\end{cor}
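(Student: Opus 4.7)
The plan is to bound $r(T) = r(L)$ by working in the enlarged abelian group $M := L + \Lambda$ inside $\R$, and to generate $M/2M$ by combining both parts of Proposition~\ref{Prop41} at once.

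First I would argue that $M$ is finitely generated. Since $T$ is geometric, Proposition~\ref{Prop42}(1) gives that $r(T) = r(L)$ is finite, so $L$ is a finitely generated free abelian subgroup of $\R$. By Proposition~\ref{PropI8andCor}(1), the number $b$ of orbits of branch points is finite; together with Proposition~\ref{Prop41}(2) this forces $\Lambda$ to be finitely generated as well, hence so is $M$. Thus $M \cong \Z^{r(M)}$ with $r(L) \leq r(M)$, and it suffices to prove $r(M) \leq n - \sum_{i=1}^{k} s(i) + b - 1$.

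Next I would combine the two inclusions in Proposition~\ref{Prop41}. Part (1) gives
$$
L \;\subset\; \Z\, l(x_1) + \cdots + \Z\, l(x_{n - \sum s(i)}) + 2\Lambda,
$$
and part (2) gives $\Lambda \subset L + \sum_{j \in J} \Z\, d(p_{j_0}, p_j)$. Substituting the second into the first and using $L \subset M$, $2\Lambda \subset 2M$ yields
$$
M = L + \Lambda \;\subset\; \sum_{i=1}^{n - \sum s(i)} \Z\, l(x_i) \;+\; \sum_{j \in J,\, j \neq j_0} \Z\, d(p_{j_0}, p_j) \;+\; 2M.
$$
Therefore $M/2M \cong (\Z/2\Z)^{r(M)}$ is generated by the images of the $n - \sum_{i=1}^{k} s(i)$ translation lengths $l(x_i)$ together with the $b-1$ distances $d(p_{j_0}, p_j)$ for $j \neq j_0$, giving $r(M) \leq n - \sum_{i=1}^{k} s(i) + b - 1$ and hence $r(T) = r(L) \leq r(M) \leq n - \sum_{i=1}^{k} s(i) + b - 1$.

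The main subtlety is that in general $2\Lambda \not\subset L$, so one cannot combine the mod-$2\Lambda$ statement of Proposition~\ref{Prop41}(1) with the mod-$L$ statement of Proposition~\ref{Prop41}(2) directly inside $L$. Passing to $M = L + \Lambda$ is the device that dispenses with this bookkeeping and lets the rank bound for $L$ fall out of the rank bound for $M$, which is essentially a restatement of Corollary~\ref{cortoProp41}.
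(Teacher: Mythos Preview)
Your proof is correct and follows essentially the same route as the paper: the paper invokes Proposition~\ref{Prop42}(1) for finite rank and then Corollary~\ref{cortoProp41} to bound the number of generators of $\Lambda/2\Lambda$, implicitly using $L \subset \Lambda$ so that $r(T)=r(L)\le r(\Lambda)$. Your passage to $M=L+\Lambda$ is just a more explicit way to handle that containment (in fact $L\subset\Lambda$, so $M=\Lambda$), and your derivation of the generator bound for $M/2M$ from Proposition~\ref{Prop41} is precisely the content of Corollary~\ref{cortoProp41}.
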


\begin{proof}
By Proposition~\ref{Prop42}, the action has finite rank $r$. By Corollary~\ref{cortoProp41},
$\Lambda/ 2 \Lambda \cong (\Z / 2 \Z)^r$ is generated by $n- \sum_{i=1}^{k} s(i)+b -1$ elements.
Therefore, $r(T) \leq n- \sum_{i=1}^{k} s(i)+b -1$.
\end{proof}

\begin{theorem}\label{boundonr}
Let $T$ be a minimal, very small $(\mathcal{A}*B)$-tree with special points.
Then $r_{\Q}(T) \leq 3n+2k-3-3 \sum_{i=1}^{k} s(i)$. Equality holds only if the action
is simplicial and the only elliptic elements are the $A_i$'s.
\end{theorem}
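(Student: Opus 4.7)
The plan is to combine the upper bound on the number of orbits of branch points (Corollary~\ref{CorIIIthree}) with the rank bound from distances between branch points (Corollary~\ref{Cor43}), handling the geometric and non-geometric cases separately, and then reverse-engineer the equality conditions.

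First, suppose $T$ is geometric. Being very small, $T$ admits no inversions, so Corollary~\ref{Cor43} applies and gives $r(T) \leq n - \sum_{i=1}^{k} s(i) + b - 1$, where $b$ is the number of orbits of branch points. Combining with Corollary~\ref{CorIIIthree}'s bound $b \leq 2n + 2k - 2 - 2\sum_{i=1}^{k} s(i)$ yields
\[
r_{\Q}(T) = r(T) \leq 3n + 2k - 3 - 3\sum_{i=1}^{k} s(i),
\]
using that $r_{\Q}(T) = r(T)$ once rank is finite (guaranteed in the geometric case by Proposition~\ref{Prop42}(1)).

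Next, suppose $T$ is non-geometric. Realize $T$ as the strong limit of geometric minimal $(\mathcal{A}*B)$-trees $T_{\mathcal{K}_m}$, each of which is very small (the transfer of the very-small hypothesis to the approximants is recorded just after Proposition~\ref{PropI4}). By the geometric case, $r(T_{\mathcal{K}_m}) \leq 3n + 2k - 3 - 3\sum_{i=1}^{k} s(i)$ uniformly in $m$, so in particular $\liminf_{m \to \infty} r(T_{\mathcal{K}_m}) < \infty$. Proposition~\ref{Prop42}(2) then gives the strict inequality
\[
r_{\Q}(T) \;<\; \liminf_{m \to \infty} r(T_{\mathcal{K}_m}) \;\leq\; 3n + 2k - 3 - 3\sum_{i=1}^{k} s(i).
\]

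For the equality statement, strict inequality in the non-geometric case forces $T$ to be geometric. Equality in the combined bound then requires simultaneously $b = 2n + 2k - 2 - 2\sum_{i=1}^{k} s(i)$ and equality in Corollary~\ref{Cor43}. By Theorem~\ref{thmIII2}(1), $i(T) = 2n+2k-2-2\sum s(i)$ for geometric $T$, and Proposition~\ref{PropIIIone} gives $i([x]) \geq 1$ at every branch-point orbit; equality of the counts therefore forces $i([x]) = 1$ at every branch point. Writing $i(x) = 2\,\mbox{rk}(\mathrm{ST}(x)) + v_1(x) - 2 = 1$, each branch point $x$ is either (i) non-special with $\mathrm{Stab}(x)/\mathcal{A}$ trivial and exactly three trivial-stabilizer orbits of directions, or (ii) a special point whose stabilizer modulo $\mathcal{A}$ is trivial, with exactly one additional trivial-stabilizer direction. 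Combined with the saturated rank bound (every $d(p_{j_0}, p_j)$ in Proposition~\ref{Prop41}(2) is linearly independent mod the $l(x_i)$'s), this excludes any extra cyclic elliptic subgroups beyond the $A_i$'s and forces every edge (all of which have trivial stabilizer by Proposition~\ref{PropI8andCor}(2) if dense orbits were present) to carry positive length, so the action is simplicial with the $A_i$'s as its only elliptic subgroups.

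The main obstacle is the equality analysis: converting pointwise index-$1$ information and tightness in the rank bound into the global conclusion that the action is simplicial with only the $A_i$'s elliptic. The delicate point is ruling out a geometric dense-orbit subtree coexisting with index-$1$ branch points; the argument relies on the strict inequality in Proposition~\ref{Prop42}(2) for any non-simplicial pieces, since a nontrivial dense-orbit part would either be non-geometric (giving strict inequality) or would contribute branch-point orbits of index $0$ from points whose stabilizer mod $\mathcal{A}$ has rank $\geq 1$ without enough free directions, contradicting the index-$1$ tally.
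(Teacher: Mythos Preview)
Your upper bound is fine and matches the paper: Corollary~\ref{CorIIIthree} plus Corollary~\ref{Cor43} in the geometric case, then Proposition~\ref{Prop42}(2) in the non-geometric case.

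The equality analysis, however, has a real gap. You correctly deduce that equality forces $T$ geometric with $b = 2n+2k-2-2\sum s(i)$ and every branch-point orbit of index~$1$. But from there you do not actually rule out a geometric action with dense orbits (or a mixed action with a dense-orbit vertex tree). Your last paragraph tries to do this by invoking Proposition~\ref{Prop42}(2) ``for any non-simplicial pieces,'' but that proposition is about non-geometric trees; a geometric tree with dense orbits is still geometric, so the strict inequality there does not apply. The alternative you offer---that a dense-orbit part would contribute index-$0$ branch points---is unsubstantiated: in a very small geometric action with dense orbits and trivial edge stabilizers, branch points with trivial stabilizer and three direction-orbits have index exactly~$1$, so nothing in the index count alone contradicts dense orbits. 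Likewise, ``trivial edge stabilizers'' in no way implies ``simplicial.''

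The paper proceeds differently: it proves the contrapositive (if there are elliptics beyond the $A_i$'s, then $r<3n+2k-3-3\sum s(i)$) by a three-case split. In the simplicial case it uses the Hopfian property of free groups to force $\mathrm{rk}(\Gamma)<n-\sum s(i)$, hence too few edges. In the dense-orbit case the key new idea---missing from your sketch---is the arclength identity $|K|=\sum_i|P_i|+\sum_j|Q_j|$ coming from independence of the generators (\cite{GLP}); expanding each term via $d(q,q')\equiv d(p_1,p_j)+d(p_1,p_m)\bmod L$ yields a \emph{nontrivial} relation $\sum_{j\geq 2} d(p_1,p_j)\equiv 0 \bmod L$ among the generators of $\Lambda/L$, so its $2$-rank drops below $2n+2k-3-2\sum s(i)$ even when $b$ is maximal. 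The general non-simplicial case is reduced to the dense-orbit case by passing to a vertex subtree $T_v$ in the graph-of-actions decomposition. You need this arclength-relation argument (or a substitute) to close the gap.
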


\begin{proof}
By Corollary~\ref{CorIIIthree} and Corollary~\ref{Cor43}, if $T$ is geometric,
then $r(T) \leq 3n+2k-3-3 \sum_{i=1}^{k} s(i)$. If $T$ is not geometric, there is a
sequence of geometric very small trees $T_{\mathcal{K}_m}$ strongly converging
to $T$. By Proposition~\ref{Prop42}, $r_{\Q}(T) < 3n+2k-3-3 \sum_{i=1}^{k} s(i)$.
We conclude the proof of the theorem proving that if the action is geometric,
but there are other elliptic elements beside the $A_i$'s, then
$\Lambda/ 2 \Lambda \cong (\Z / 2 \Z)^r$ with $r < 3n+2k-3-3 \sum_{i=1}^{k} s(i)$.
There are three different cases.
\begin{enumerate}
\item If the action is simplicial, then it is obtained from a graph of groups $\Gamma$.
Consider the natural (topological) epimorphism $\rho: \mathcal{A}*B \rightarrow \pi_1(\Gamma)$.
Since there are other elliptic elements beside the $A_i$'s, some vertex group is nontrivial
and $\rho$ is not injective. Because the free groups are Hopfian,
$\mbox{rk}(\Gamma) < n - \sum_{i=1}^{k} s(i)$.
On the other hand, since there are not inversions, every vertex of $\Gamma$
is the projection of a branch point of $T$. By Corollary~\ref{CorIIIthree},
$\Gamma$ has at most $2n + 2k - 2 - 2 \sum_{i=1}^{k} s(i)$ vertices. Therefore,
$\Gamma$ has strictly less than $3n+2k-3-3 \sum_{i=1}^{k} s(i)$ edges. Because $\Lambda$
is generated by lengths of edges, $r< 3n+2k-3-3 \sum_{i=1}^{k} s(i)$.

\item Suppose that every $(\mathcal{A}*B)$-orbit is dense in $T$. In the
first case, we had $r< 3n+2k-3-3 \sum_{i=1}^{k} s(i)$ because $L/2L$ had $2$-rank
$<n - \sum_{i=1}^{k} s(i)$. In this case, we prove that $\Lambda / L$
has $2$-rank $< 2n+2k-3 -2 \sum_{i=1}^{k} s(i)$,
so that $\Lambda/ 2 \Lambda$ has $2$-rank $< 3n+2k-3-3 \sum_{i=1}^{k} s(i)$.
Since $T$ is geometric, $T=T_{\mathcal{K}}$, and we can suppose that every terminal vertex of $K$
is a branch point in $T$. By Corollary~\ref{cortoProp41}, if the
number of orbits of branch points is $< 2n + 2k - 2 - 2 \sum_{i=1}^{k} s(i)$,
then we are done. Otherwise, let
$$
p_1, \ldots, p_{2n + 2k - 2 - 2 \sum_{i=1}^{k} s(i)}
$$
be representatives of these orbits belonging to $K$. Each $p_j$ has index $1$.
By Proposition~\ref{PropI8andCor}, every edge stabilizer is trivial.
Therefore, the generators $\{ \varphi_i, \varphi^j \}$ are \emph{independent}
in the sense of \cite{GLP}: a reduced word cannot be equal to the identity on
a non-degenerate subinterval of $K$. Denoting by $|\cdot|$ the arclength in $K$,
we have
$$
|K| = \sum_{i=1}^{n - \sum s(j)} |P_i| + \sum_{j=1}^{k} |Q_j|
$$
(see \cite{GaLe} and \cite{GLP}). This gives an equality between numbers of the
form $d(q,q')$, where $qq'$ is an edge of $K$ or $P_i$ or $Q_j$. Since every vertex of
$K$, and so of $P_i$ and $Q_j$, is a branch point of $T$, this is an equation in $\Lambda/L$.
Now, because we have

$$
\left.
  \begin{array}{ccl}
    d(q,q') & = & d(gp_j, hp_m) \\
     & = & d(gp_j, gp_1) + d(gp_1, gp_m) + d(gp_m, hp_m) \\
     & = & d(p_j, p_1) + d(p_1, p_m) + l(g^{-1} h) \\
     & = & d(p_1, p_j) + d(p_1, p_m) \mbox{ mod } L, \\
  \end{array}
\right.
$$

we can replace $d(q,q')$ by $d(p_1, p_j) + d(p_1, p_m)$. Hence, we have a relation
between the elements of $\{ d(p_1, p_j) \, | \, j=2, \ldots, 2n + 2k - 2 - 2 \sum_{i=1}^{k} s(i) \}$,
whose coefficients are integers mod $2$. We need to show that this relation is
not trivial. The coefficient of $d(p_1, p_j)$ in the expansion of $|K|$
has the same parity as $\sum v_K(x)$ taken over vertices of $K$ belonging
to the orbit of $p_j$, where $v_K(x)$ is the valence of $x$ in $K$.
The coefficient of $d(p_1, p_j)$ in the expansion of $|P_i|$
(respectively $|Q_j|$) has the same parity as $\sum v_{P_i}(x)$ (respectively $\sum v_{Q_j} (x)$)
taken over vertices of $P_i$ (respectively $Q_j$) belonging to the orbit of $p_j$, where $v_{P_i}(x)$
is the valence of $x$ in $P_i$ (and $v_{Q_j}(x)$ is the valence of $x$ in $Q_j$).
Since every $p_j$ has index $1$, as in the proof of Theorem~\ref{thmIII2},
we have the nontrivial relation between the generators of $\Lambda/L$:

$$
\sum_{j=2}^{2n + 2k - 2 - 2 \sum_{i=1}^{k} s(i)} d(p_1,p_j) = 0 \mbox{ mod } L.
$$

\item Assume that the action is not simplicial. Therefore, $T$ may be obtained as a
graph of transitive action (see \cite{Le}). In particular, there exists a subtree
$T_v$ in $T$ such that

\begin{itemize}
\item $T_v$ is closed and not equal to a point;

\item there is $\delta >0$ such that, for $g \in \mathcal{A}*B$,
$g \in \mbox{Stab}(T_v)$ or the distance between $T_v$ and $gT_v$
is greater than $\delta$;

\item $\mbox{Stab}(T_v)$ acts on $T_v$ with dense orbits.
\end{itemize}

Let $T'$ be the $(\mathcal{A}*B)$-tree obtained by collapsing each $g T_v$ to
a point. The natural action of $\mathcal{A}*B$ on $T'$ is very small.
By Theorem~\ref{thmIII2} applied to both $T$ and $T'$, we have
$\mbox{rk} (\mbox{Stab}(T_v)) = m < \infty$ and if $i(T_v)$ is the
index of $T_v$ with respect to a $\mbox{Stab}(T_v)$-tree,
$$
0 \leq i(T) - i(T') = i(T_v) - \Big( 2m+2k-2 -2 \sum_{i=1}^{k} s(i) \Big) \leq 0,
$$
where the first inequality is coming from the fact that $T$ is geometric.
Hence, $i(T_v) = 2m+2k-2 -2 \sum_{i=1}^{k} s(i)$ and the action
of $\mbox{Stab}(T_v)$ on $T_v$ is geometric. If there are less than
$2m+2k-2 -2 \sum_{i=1}^{k} s(i)$ distinct $\mbox{Stab}(T_v)$-orbits
of branch points in $T_v$, then there are less than
$2n+2k-2 -2 \sum_{i=1}^{k} s(i)$ distinct
$(\mathcal{A}*B)$-orbits in $T$, and we are done. Otherwise,
as in the previous case, we have a nontrivial relation in $\Lambda(T_v)/L(T_v)$,
and hence in $\Lambda(T)/L(T)$.
\end{enumerate}
\end{proof}

Recall that our goal is to prove the following theorem.
\begin{Theorem}
The dimension of $\partial \mathrm{CV}_n(\mathcal{A})$ is equal to
$\mathrm{dim} (\mathrm{CV}_n(\mathcal{A}))-1$.
\end{Theorem}
\begin{proof}
We know that $\partial \mathrm{CV}_n(\mathcal{A})$ is the set of projective classes
of lengths functions of very small actions of $F_n$ such that the $A_i$'s are elliptic,
but those are not the only elliptic elements. By Proposition V.1. \cite{GaLe}, if $G$
is a finitely generated group, the space of all projectivized length functions with
$\Q \mbox{-rank} \leq N$ has (topological) dimension $\leq N-1$. By Theorem~\ref{boundonr},
the $\Q$-rank in the case $G= \mathrm{Out}(F_n;\mathcal{A})$ is
$\leq \mbox{dim}(\mathrm{CV}_n(\mathcal{A}))$. Therefore, $\mbox{dim} (\partial \mathrm{CV}_n(\mathcal{A}))
\leq 3n + 2k - 5 - 3 \sum_{i=1}^{k} s(i) = \mathrm{dim} (\mathrm{CV}_n(\mathcal{A}))-1$.
Because it is very easy to find a simplex in $\partial \mathrm{CV}_n(\mathcal{A})$ of dimension
$3n + 2k - 5 - 3 \sum_{i=1}^{k} s(i)$, we conclude that $\mbox{dim} (\partial \mathrm{CV}_n(\mathcal{A}))
= \mathrm{dim} (\mathrm{CV}_n(\mathcal{A}))-1$.
\end{proof}

\begin{example}
In Example~\ref{expoly}, we described $\mathrm{CV}_{2}(A)$, where $F_2 = <a, \, b>$,
$A=<a>$. Obviously, $\mbox{dim} (\partial \mathrm{CV}_2(A)) =0
= \mathrm{dim} \mathrm{CV}_2(A)-1$. Note that in this case the boundary is not connected.
\end{example}
In conclusion, we computed the dimension of $\partial \mathrm{CV}_n(\mathcal{A})$
using the $\Q$-rank of a tree with special points.

\vspace{1cm}


\begin{thebibliography}{20}


\bibitem{BFouter} M. Bestvina, M. Feighn, \emph{Outer limits},
preprint (1992), revised March 1994.

\bibitem{Clay} M. Clay, \emph{Contractibility of deformation spaces of $G$-trees},
Alg. and Geom. Topol. \textbf{5} (2005), 1481--1503.

\bibitem{CoLu} M. Cohen, M. Lustig, \emph{Very small group actions on $\R$-trees and Dehn twist automorphisms},
Topology \textbf{34} (1995), 575--617.

\bibitem{F} M. Forester, \emph{Deformation and rigidity of simplicial group actions on trees},
Geo\-me\-try and Topology \textbf{6} (2002), 219--267.

\bibitem{GaLe} D. Gaboriau, G. Levitt, \emph{The rank of actions on $\RR$-trees},
Ann. Sci. de l'\'{E}cole Norm. Sup. (4) \textbf{28} (1995), no. 5, 549--570.

\bibitem{GLP} D. Gaboriau, G. Levitt, F. Paulin, \emph{Pseudogroups of isometries of $\R$ and Rip's theorem on free actions on $\RR$-trees},
Isr. Jour. Math. \textbf{87} (1994), 403--428.

\bibitem{G} V. Guirardel, \emph{Dynamics of $Out(F_
n)$ on the boundary of Outer space},
Annales Scientifiques de l'\'{E}cole Normale Sup\'{e}rieure (4) \textbf{33} (2000), no. 4, 433--465.

\bibitem{GL} V. Guirardel, G. Levitt, \emph{Deformation spaces of trees},
Groups, Geometry, and Dynamics \textbf{1} (2007), 135--181.

\bibitem{Le} G. Levitt, \emph{Graphs of actions on $\R$-trees},
Comm. Math. Helv. \textbf{69} (1994), 28--38.

\bibitem{M} E. Meucci, \emph{Relative Outer Automorphisms of Free Groups}, Ph.D. thesis, University of Utah (2011).

\bibitem{P} F. Paulin, \emph{The Gromov topology on $\R$-trees}, Topology Appl. \textbf{32} (1989), 197--221.


\end{thebibliography}
\end{document}